\documentclass[a4paper,11pt]{amsart}

\usepackage{amsfonts,amsthm,amssymb,amsmath,amscd,ascmac}
\usepackage[all]{xy}

\setlength{\oddsidemargin}{0 pt}
\setlength{\evensidemargin}{0 pt}
\setlength{\headheight}{0pt}
\setlength{\topmargin}{0pt}
\setlength{\textwidth}{158mm}
\setlength{\textheight}{240mm}
\setlength{\voffset}{0 pt}
\newtheorem{lem}{Lemma}[section]
\newtheorem{prop}[lem]{Proposition}
\newtheorem{cor}[lem]{Corollary} 
\newtheorem{rem}[lem]{Remark} 
\newtheorem{tm}[lem]{Theorem} 
 

\newcommand{\ZZ}{\mathbb{Z}}
\newcommand{\QQ}{\mathbb{Q}}

\newcommand{\CC}{\mathbb{C}}
\newcommand{\PP}{\mathbb{P}}

\newcommand{\GL}{\mathrm{GL}}
\newcommand{\Aut}{\mathrm{Aut}}

\newcommand{\w}{\omega}

\numberwithin{equation}{section}
\begin{document}
\title{On cubic fourfolds with an inductive structure}
\date{}
\author{Kenji Koike, Yamanashi University}  
\begin{abstract}
 We study the number of planes for four dimensional projective hypersurfaces 
which has so-called inductive structure. We also determine transcendental lattices 
for cubic fourfolds of this type. 
\end{abstract}
\maketitle
\section{Introduction} 
Algebraic surfaces in $\PP^3(\CC)$ defined by a homogeneous equation
\begin{align} \label{surface}
 f_1(x, y) = f_2(z, w)
\end{align}
have been well studied as examples of surfaces with many lines. For example, 
a smooth quartic surface over $\CC$ contains at most 64 lines (Segre, Ram and 
Sch\"{u}tt, \cite{RS15}) and this bound is realized by Schur's quartic 
\[
 x(x^3 - y^3) = z(z^3 - w^3).
\]  
Let $f(u,v)$ be a Klein's invariant polynomial   
\[
 f(u, v) = (u^{30} + v^{30}) + 522(u^{25} v^5 - u^5 v^{25}) 
- 10005(u^{20} v^{10} + u^{10} v^{20})
\]
for the icosahedral group. Then the surface $f(x, y) = f(z, w)$ of degree 
$30$ has $2700$ lines (Boissi\'ere and Sarti, \cite{BS07}). In general, we have
\begin{prop}[\cite{BS07, CHM95}]
Let $S$ be a smooth surface of degree $d$ defined by (\ref{surface}). 
 The number of lines on $S$ is exactly $d(d + \alpha_d)$, where $\alpha_d$ is 
the order of the group of isomorphisms of $\PP^1(\CC)$ mapping zeroes of $f_1$ to 
zeroes of $f_2$. 
\end{prop}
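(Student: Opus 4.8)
The plan is to write $F = f_1(x,y) - f_2(z,w)$ for the defining form of $S$, and first record that smoothness forces $f_1$ and $f_2$ each to have $d$ distinct roots: at a singular point Euler's relation gives $\partial_x f_1 = \partial_y f_1 = 0$ (or the analogous pair for $f_2$) at a nonzero argument, which is exactly the condition that the binary form has a repeated root. With this in hand I would classify the lines $L \subset S$ according to the two linear projections $\pi_1 \colon [x:y:z:w] \mapsto [x:y]$ and $\pi_2 \colon [x:y:z:w] \mapsto [z:w]$. Under a linear projection a line maps to a point or isomorphically onto $\PP^1$, so each $\pi_i$ restricted to $L$ is either constant or an isomorphism, leaving three cases.

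Next I would run the case analysis by restricting $F$ to a parametrization $[s:t] \mapsto L$, so that $f_1(x(s,t),y(s,t))$ and $f_2(z(s,t),w(s,t))$ become binary forms of degree $d$ in $s,t$ that must agree. If both projections are constant, $L$ is the join of a point $[a:b:0:0]$ with $[0:0:c:d]$ and the equation reduces to $s^d f_1(a,b) = t^d f_2(c,d)$; this holds exactly when $[a:b]$ is a root of $f_1$ and $[c:d]$ a root of $f_2$, producing precisely $d^2$ lines. If one projection is constant and the other an isomorphism, one side is a scalar times a $d$-th power of a linear form (a single $d$-fold root) while the other, by distinctness of the roots, has $d$ distinct roots, so the forms cannot coincide and no line arises. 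The remaining case, both projections isomorphisms, is where $L$ is the graph $(z,w) = A(x,y)$ of an invertible $A \in \GL_2$, and $L \subset S$ becomes the exact identity $f_2 \circ A = f_1$.

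It then remains to count these graphs, which is the delicate step. Any admissible $A$ carries the zero set of $f_1$ bijectively onto that of $f_2$, hence descends to an element $\phi$ of the group $G$ of isomorphisms of $\PP^1$ taking the roots of $f_1$ to those of $f_2$ (a finite group, being a coset of the stabilizer of the $\ge 3$ roots of $f_1$), whose order is $\alpha_d$. Conversely I would show each $\phi \in G$ lifts to exactly $d$ admissible matrices: a chosen representative $A_0$ satisfies $f_2 \circ A_0 = c\, f_1$ for some $c \in \CC^\times$ (both forms are square-free with the same zero divisor), and among its scalar multiples $t A_0$ only the $d$ solutions of $t^d = c^{-1}$ give $f_2 \circ (t A_0) = f_1$. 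The key subtlety is that $t A_0$ and $A_0$ define \emph{different} lines in $\PP^3$, since the scalar multiplies only the $(z,w)$-block and is not a projective rescaling of the whole point; thus these $d$ lifts yield $d$ distinct lines, none of them of the first type because their $\pi_1$ is an isomorphism rather than constant. This contributes $d\,\alpha_d$ lines.

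Combining the three cases and noting the two surviving families are disjoint gives $d^2 + d\,\alpha_d = d(d + \alpha_d)$ lines. I expect the main obstacle to be the bookkeeping in the counting step, namely verifying that the correspondence between admissible matrices $A$ and elements of $G$ is exactly $d$-to-$1$ and that no line is counted twice, rather than the geometric trichotomy, which is forced by the two projections.
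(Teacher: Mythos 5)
The paper never proves this proposition: it is quoted from \cite{BS07, CHM95} as motivation in the introduction, so there is no in-paper argument to compare against. Your proof is correct and complete, and it is the natural one. It is also, in effect, the one-dimension-lower version of the argument the paper does carry out in Section 2 for the fourfold case (Lemmas 2.1--2.4): your trichotomy by whether the projections $\pi_1,\pi_2$ restrict to constants or isomorphisms on $L$ is the analogue of the paper's rank-$2$ versus rank-$3$ dichotomy for the linear system $Ax=By$; your $d^2$ lines joining a root of $f_1$ to a root of $f_2$ correspond to the paper's rank-$2$ planes attached to pairs of $d$-flex tangents; and your $d\,\alpha_d$ graph lines correspond to the paper's rank-$3$ planes counted by $|\Aut(F_1)|$, with the factor $d$ migrating to the other summand because $\alpha_d$ counts elements of $\mathrm{PGL}_2$ while the paper's $\Aut(F_1)\subset\GL_3$ already contains the scalars $\mu_d$. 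Your key counting step (each $\phi$ lifts to exactly $d$ matrices $tA_0$ with $t^d=c^{-1}$, and distinct matrices give distinct lines because the rescaling acts only on the $(z,w)$-block) is handled correctly, as is the exclusion of the mixed case via squarefreeness of $f_1,f_2$, which you rightly derive from smoothness. Two small points worth a sentence each in a write-up: the trichotomy presupposes that $L$ is not one of the two center lines $x=y=0$ or $z=w=0$, which follows since $F$ restricts to $\mp f_i\not\equiv 0$ there; and finiteness of $\alpha_d$ (hence of the line count) uses $d\ge 3$, which you implicitly invoke when citing the stabilizer of at least three points.
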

In this paper, we study four dimensional hypersurfaces 
$X$ defined by  a homogeneous equation
\begin{align} \label{eqX}
 F_1(x_0, x_1, x_2) = F_2(y_0, y_1, y_2)
\end{align}
with projective coordinates $[x_0: x_1: x_2: y_0: y_1: y_2] \in \PP^5(\CC)$. 
Note that $X$ is smooth if and only if plane curves
\begin{align} \label{eqC}
 C_1 : F_1(x_0, x_1, x_2) = y_0 = y_1 = y_2 = 0, \qquad 
C_2 : x_0 = x_1 = x_2 = F_2(y_0, y_1, y_2) = 0
\end{align}
are smooth (For simplicity, we will use coordinates $[x_0:x_1:x_2]$ to represent 
points on $C_1$). 
To state our results, we introduce a few notations and terminologies. 
We say that a homogeneous polynomial $F(x_0,x_1,x_2)$ is equivalent to $G(x_0,x_1,x_2)$ 
and write $F \sim G$, if there is $g \in \GL_3(\CC)$ such that $F(g \cdot x) = G(x)$. 
For a homogeneous polynomial $F(x_0,x_1,x_2)$, we define the group of automorphisms 
\[
 \Aut(F) = \{ g \in \GL_3(\CC) \ | \ F(gx) = F(x)\}.
\]
The group $\Aut(F)$ contains $\mu_d = \{ \zeta \in \CC \ | \ \zeta^d = 1\}$ 
as scalar matrices, where $d = \deg F$.  
Next, let $C$ be a plane curve. We call a smooth point $P \in C$ a $k$-{\it flex point} 
if the tangent line at $P$ and $C$ intersect with intersection multiplicity $k \geq 3$. 
We will show  the following Proposition in Section 2.
\begin{prop} \label{main-th1}
Let $X \subset \PP^5(\CC)$ be a smooth hypersurface of degree $d \geq 3$ 
defined by {\rm (\ref{eqX})}, and $\nu_i$ be the number of $d$-flex points. 
\\
{\rm (1)} \ If $F_1 \sim F_2$, the number of planes in $X$ is $d \nu_1^2 + |\Aut(F_1)|$.
\\
{\rm (2)} \ If $F_1 \not\sim F_2$, the number of planes in $X$ is $d \nu_1 \nu_2$. 
\end{prop}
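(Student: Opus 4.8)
The plan is to classify every plane $\Pi\cong\mathbb{P}^2\subset X$ according to how it meets the two disjoint coordinate planes $\Pi_1=\{y_0=y_1=y_2=0\}$ and $\Pi_2=\{x_0=x_1=x_2=0\}$, which satisfy $X\cap\Pi_1=C_1$ and $X\cap\Pi_2=C_2$. The first step is a reduction: I claim $\dim(\Pi\cap\Pi_i)\le 0$ for $i=1,2$. Indeed, if $\Pi\cap\Pi_1$ contained a line $\ell$, then $\ell\subset\Pi\cap\Pi_1\subset X\cap\Pi_1=C_1$, which is impossible because the smooth plane curve $C_1$ of degree $d\ge 3$ is irreducible and contains no line; the same argument applies to $\Pi_2$. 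Thus each of $\Pi\cap\Pi_1,\Pi\cap\Pi_2$ is either empty or a single point, and there are essentially three cases to treat.

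Next I would dispose of the case where one intersection, say $\Pi\cap\Pi_2$, is empty. Then the forms $x_0,x_1,x_2$ have no common zero on $\Pi$, so recording $[x_0:x_1:x_2]$ identifies $\Pi$ with $\mathbb{P}^2$ and writes it as a graph $\Pi=\{y=Ax\}$ with $A$ a $3\times 3$ matrix; now $\Pi\subset X$ is equivalent to the identity $F_1(x)=F_2(Ax)$. If $A$ were singular then, by the first reduction, $\mathrm{rank}\,A=2$, and $F_2(Ax)$ would be a binary form in suitable coordinates, hence a product of $d$ lines through $\mathbb{P}(\ker A)$, forcing $C_1$ to be singular — a contradiction. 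So $A\in\GL_3$, whence $\Pi\cap\Pi_1=\emptyset$ as well and $F_1\sim F_2$. Conversely the solutions $A$ of $F_2(Ax)=F_1(x)$ form a single coset of $\Aut(F_2)$ and the assignment $A\mapsto\Pi$ is injective, so this case contributes exactly $|\Aut(F_1)|$ planes when $F_1\sim F_2$ and none otherwise.

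The heart of the argument is the remaining case, in which $\Pi\cap\Pi_1=\{[q:0]\}$ and $\Pi\cap\Pi_2=\{[0:r]\}$ are points, necessarily with $q\in C_1$ and $r\in C_2$. Writing $\Pi=\langle(q,0),(0,r),(s,t)\rangle$, a general point of $\Pi$ is $[\,\alpha q+\gamma s:\beta r+\gamma t\,]$, and $\Pi\subset X$ becomes the identity $F_1(\alpha q+\gamma s)=F_2(\beta r+\gamma t)$ in $\alpha,\beta,\gamma$. Since the left side is independent of $\beta$ and the right side of $\alpha$, both sides are functions of $\gamma$ alone; hence $F_1(\alpha q+\gamma s)=F_1(s)\gamma^d$, i.e. the line $L=\overline{[q][s]}$ meets $C_1$ only at $[q]$ with multiplicity $d$. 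In other words $[q]$ is a $d$-flex point of $C_1$ with flex tangent $L$, and symmetrically $[r]$ is a $d$-flex of $C_2$. This translation is the step I expect to be the main obstacle: recognizing that the vanishing of all the lower-order terms of the restricted form is exactly the $d$-flex condition, and checking that $F_1(s),F_2(t)\neq 0$ (which holds because $L\not\subset C_1$ and $M\not\subset C_2$).

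Finally I would count. After subtracting multiples of $(q,0)$ and $(0,r)$ from the third generator and rescaling, that generator may be normalized to $(s,\rho t)$ with $s,t$ fixed representatives on the two flex tangents and $\rho\in\mathbb{C}^\times$; distinct $\rho$ give distinct planes. Comparing the $\gamma^d$-coefficients in $F_1(\gamma s)=F_2(\gamma\rho t)$ yields $\rho^{d}=F_1(s)/F_2(t)$, a nonzero constant, which has exactly $d$ solutions. Thus each ordered pair consisting of a $d$-flex of $C_1$ and a $d$-flex of $C_2$ produces exactly $d$ planes, giving $d\nu_1\nu_2$ planes in this case. Adding the two contributions (their loci are disjoint, so there is no overlap) proves the Proposition: in case (2) there are $d\nu_1\nu_2$ planes, while in case (1) one has $F_1\sim F_2$, hence $C_1\cong C_2$ and $\nu_1=\nu_2$, so the total is $d\nu_1^2+|\Aut(F_1)|$.
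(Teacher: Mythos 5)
Your proof is correct and follows essentially the same route as the paper: your classification by $\dim(\Pi\cap\Pi_i)$ is exactly the paper's classification of planes by the ranks of the defining system $Ax=By$ (empty intersection $\leftrightarrow$ rank $3$, a single point $\leftrightarrow$ rank $2$), your graph case reproduces Lemma 2.2, and your expansion of $F_1(\alpha q+\gamma s)=F_2(\beta r+\gamma t)$ is the parametrized form of the paper's divisibility argument (Lemmas 2.3--2.4), with your condition $\rho^d=F_1(s)/F_2(t)$ giving the same $d$ planes per pair of $d$-flexes as the paper's $\zeta\in\mu_d$. I see no gaps.
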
 
In the cases of $d = 3$, we have
\begin{cor} \label{cor1} 
Let $X$ be a smooth cubic fourfold defined by the equation {\rm (\ref{eqX})}.
\\
{\rm (1)} \ If $F_1 \sim F_2$, the number of planes in $X$ is 
$\begin{cases}
  243 + 162 \quad (j = 0) \\ 
  243 + 108 \quad (j = 1728) \\
  243 + 54 \quad (j \ne 0, 1728)
\end{cases},$ 
where $j$ is the $j$-invariant of $C_i$ as an elliptic curve.  
\\
{\rm (2)} \ If $F_1 \not\sim F_2$, then $X$ contains $243$ planes.
\end{cor}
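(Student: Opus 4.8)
The plan is to specialize Proposition \ref{main-th1} to $d = 3$ and to evaluate the two invariants $\nu_i$ and $|\Aut(F_1)|$ that appear there. Since $X$ is smooth, by (\ref{eqC}) each curve $C_i$ is a smooth plane cubic, hence an elliptic curve. First I would determine $\nu_i$. For a cubic curve B\'ezout's theorem forces the tangent line at any smooth point to meet $C_i$ in a divisor of degree $3$; a $3$-flex point is precisely a point where this divisor is $3P$, that is, an ordinary inflection point, and since the degree is $3$ no higher flexes can occur. By the classical theory of the Hessian a smooth plane cubic has exactly nine inflection points, so $\nu_1 = \nu_2 = 9$.

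With $\nu_1 = \nu_2 = 9$, part (2) of Proposition \ref{main-th1} gives $3 \nu_1 \nu_2 = 3 \cdot 9 \cdot 9 = 243$ immediately, which is the second assertion of the corollary. For part (1) the formula reads $3 \nu_1^2 + |\Aut(F_1)| = 243 + |\Aut(F_1)|$, so everything reduces to computing the order of $\Aut(F_1)$ as a function of the $j$-invariant of $C_1$.

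To compute $|\Aut(F_1)|$ I would relate it to the group $\mathrm{Aut}_{\mathrm{pr}}(C_1)$ of projective transformations preserving $C_1$. The natural map $\Aut(F_1) \to \mathrm{PGL}_3(\CC)$ has image exactly $\mathrm{Aut}_{\mathrm{pr}}(C_1)$, and its kernel consists of the scalars $c$ with $c^3 = 1$, namely $\mu_3$. Every element of $\mathrm{Aut}_{\mathrm{pr}}(C_1)$ lifts into $\Aut(F_1)$: any lift $g$ satisfies $F_1(gx) = \lambda F_1(x)$ for some $\lambda \in \CC^*$, and rescaling $g$ by a cube root of $\lambda^{-1}$ achieves $\lambda = 1$. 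Hence $|\Aut(F_1)| = 3\,|\mathrm{Aut}_{\mathrm{pr}}(C_1)|$. Next I would identify $\mathrm{Aut}_{\mathrm{pr}}(C_1)$: choosing an inflection point as the origin $O$ makes $C_1$ an elliptic curve $E$ whose inflection points are exactly the $3$-torsion $E[3]$, and an abstract automorphism $\psi = \tau_t \circ \phi$ (translation by $t$ composed with $\phi \in \Aut(E,O)$) is induced by an element of $\mathrm{PGL}_3(\CC)$ if and only if it preserves the hyperplane class $3O$, which holds exactly when $t \in E[3]$. Thus $\mathrm{Aut}_{\mathrm{pr}}(C_1) = E[3] \rtimes \Aut(E,O)$ has order $9\,|\Aut(E,O)|$.

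Finally I would invoke the standard classification of $\Aut(E,O)$: it has order $6$ when $j = 0$, order $4$ when $j = 1728$, and order $2$ otherwise. Combining, $|\Aut(F_1)| = 3 \cdot 9 \cdot |\Aut(E,O)|$ equals $162$, $108$, or $54$ in the three respective cases, which yields the three values $243 + 162$, $243 + 108$, and $243 + 54$ claimed in part (1). The main obstacle is the computation of $|\Aut(F_1)|$: one must correctly account for the factor $3$ coming from the kernel $\mu_3$ together with the surjectivity of the lift, and one must verify that the projective automorphisms of the plane cubic are precisely $E[3] \rtimes \Aut(E,O)$ rather than the full automorphism group of the abstract curve.
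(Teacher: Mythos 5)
Your proposal is correct and follows essentially the same route as the paper: specialize Proposition \ref{main-th1} to $d=3$, note that the $3$-flexes of a smooth plane cubic are its nine inflection points (so the rank-$2$ planes contribute $3\cdot 9\cdot 9 = 243$), and then compute $|\Aut(F_1)| = 3\cdot 9\cdot|\Aut(E,O)| \in \{54, 108, 162\}$. The only difference is stylistic: you obtain $|\Aut(F_1)|$ abstractly from the extension of $\mathrm{Aut}_{\mathrm{pr}}(C_1) \cong E[3]\rtimes\Aut(E,O)$ by the kernel $\mu_3$ (with the lifting/rescaling argument), whereas the paper exhibits explicit generators in the Hesse normal form and records the same group-theoretic identification only parenthetically.
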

Note that $X$ is isomorphic to  the Fermat cubic fourfold if $j = 0$. 
Recently, there is a progress in the number of planes in cubic fourfolds 
by Degtyarev, Itenberg and J. C. Ottem.
\begin{tm}[\cite{DIO21}]
Let $X \subset \PP^5(\CC)$ be a smooth cubic fourfold. Then, either $X$ has 
at most 350 planes, or, up to projective equivalence, $X$ is
\\
{\rm (1)} \ the Fermat cubic (with 405 planes), or
\\
{\rm (2)} \ the Clebsch Segre cubic (with 357 planes), or
\\
{\rm (3)} \ the cubic fourfold with 351 planes.
\end{tm}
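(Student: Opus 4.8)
The plan is to recast the enumeration of planes as a counting problem in a definite lattice, and then to run an arithmetic classification that is pinned down by the Torelli theorem for cubic fourfolds. First I would recall the Hodge-theoretic setup: for a smooth cubic fourfold $X$ the primitive cohomology $H^4_{\mathrm{prim}}(X,\ZZ)$ --- the orthogonal complement of the square $h^2$ of the hyperplane class, normalized by $(h^2)^2 = 3$ --- is an even lattice of signature $(20,2)$ isomorphic to the fixed lattice $\Lambda = U^{\oplus 2}\oplus E_8^{\oplus 2}\oplus A_2$, and the Hodge structure is of K3 type, the transcendental $2$-plane $\langle \mathrm{Re}\,\w,\mathrm{Im}\,\w\rangle$ with $\w\in H^{3,1}$ accounting for the two negative directions. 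Consequently the primitive algebraic classes $A(X) = H^{2,2}(X)\cap H^4_{\mathrm{prim}}(X,\ZZ)$ form a \emph{positive definite} lattice of rank $\rho\le 20$, and the transcendental lattice $T=A(X)^{\perp}\subset\Lambda$ has signature $(20-\rho,2)$. A plane $P\subset X$ satisfies $[P]^2 = 3$ and $[P]\cdot h^2 = 1$, so $w_P := 3[P]-h^2$ lies in $A(X)$, has norm $w_P^2 = 24$, and occupies a fixed residue class modulo $3$. The first genuine step is to prove that distinct planes have distinct cohomology classes, so that the number of planes is at most the number of such vectors $w_P$ in $A(X)$.

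Second, I would invoke the global Torelli theorem (Voisin) together with the description of the image of the period map (Laza, Looijenga), whose complement consists of two explicit divisors parametrizing degenerate or non-cubic limits. Away from these divisors, every lattice-polarized period with prescribed algebraic lattice is attained by an honest smooth cubic fourfold, so the classification reduces to pure arithmetic: over all positive definite lattices $N$ of rank $\le 20$ that embed primitively into $\Lambda$ with admissible orthogonal complement, one must maximize the number of norm-$24$ vectors in the correct coset. Since many planes force many algebraic classes, the extremal configurations are expected to have maximal $\rho = 20$, so that $T$ becomes a rank-$2$ negative definite form; such "singular" cubic fourfolds are the natural analogues of singular K3 surfaces and are parametrized by binary quadratic forms, i.e.\ by CM points.

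Third, I would convert the geometric upper bound into a lattice-theoretic one and use Nikulin's theory of discriminant forms and primitive embeddings to cut the candidate lattices down to a finite, explicitly computable list: a lower bound on the number of planes forces $\rho$ to be large and the discriminant of $T$ to be small, leaving finitely many reduced binary forms to examine. For each admissible $T$ I would compute $A(X)=T^{\perp}$ (its genus and a representative), enumerate its norm-$24$ vectors in the prescribed residue class, and thereby read off the plane count. This should isolate exactly three lattices exceeding the threshold $350$ --- yielding the Fermat cubic ($405$), the Clebsch--Segre cubic ($357$), and the remaining cubic ($351$) --- while all others fall at or below $350$.

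The main obstacle is twofold and concentrated in this last step. Geometrically one must verify \emph{effectivity}: not every norm-$24$ vector of the correct type is represented by an actual plane, and controlling which classes are represented requires the monodromy action together with the wall-and-chamber (K\"{a}hler and movable cone) structure on the associated hyperk\"{a}hler Fano variety of lines $F(X)$, whose second cohomology carries the Beauville--Bogomolov--Fujiki form of signature $(3,20)$; here the Torelli theorem for irreducible holomorphic symplectic manifolds (Verbitsky, Markman) is the essential tool. Arithmetically, the enumeration of primitive embeddings and of short vectors in definite lattices up to rank $20$ is delicate and, for the extremal cases, essentially demands a careful and likely machine-assisted computation. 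Proving the \emph{sharp} separation --- exactly $405$, $357$, $351$ above the bound and nothing strictly between $351$ and $405$ --- is where the real difficulty lies.
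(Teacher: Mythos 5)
This theorem is not proved in the paper you were given: it is quoted verbatim from Degtyarev--Itenberg--Ottem \cite{DIO21}, and the author's only comment on its proof is that the approach there ``is based on the lattice theory and the Torelli theorem.'' So there is no internal proof to compare against; the paper's actual contribution relative to this statement is merely to exhibit an explicit equation (case $j=1728$ of Corollary 1.3) for the cubic with $351$ planes. Measured against that, your outline does correctly reconstruct the strategy of \cite{DIO21}: the normalization $w_P=3[P]-h^2$ with $w_P^2=24$ in the positive definite primitive algebraic lattice, the reduction via global Torelli and the Laza--Looijenga description of the period domain to a purely arithmetic extremal problem, and the expectation that the extremal cubics have rank-$2$ (negative definite) transcendental lattice are all faithful to the actual argument.

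That said, as a proof it has genuine gaps that you yourself flag but do not close, and they are exactly where the content of the theorem lives. First, injectivity of $P\mapsto[P]$ is easy (distinct planes satisfy $[P]\cdot[P']\in\{-1,0,1\}\neq 3=[P]^2$, cf.\ Lemma 4.2 of this paper), but the converse direction --- deciding which square-$24$ vectors in the correct coset are classes of \emph{actual} planes --- is not a routine consequence of Torelli for $F(X)$ and requires a separate geometric/monodromy argument; without it you only get an upper bound for each lattice, not the exact counts $405$, $357$, $351$. Second, the finiteness and explicit enumeration of the admissible lattices $A(X)$ is asserted (``a lower bound on the number of planes forces $\rho$ large and the discriminant small'') but not derived; making this quantitative is precisely what produces the sharp threshold $350$, and nothing in your sketch explains why the bound is $350$ rather than some other number, nor why no configuration lands strictly between $351$ and $357$ or between $357$ and $405$. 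As written, the proposal is a correct program rather than a proof.
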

The Approach in \cite{DIO21} is based on the lattice theorey and the Torelli 
theorem, and it seems that the cubic fourfold with 351 planes is not given  
explicitly. It is just the second case of (1) in Corollary \ref{cor1}.
\\ \indent 
The second purpose of this paper is to determine transcendental lattices for 
cubic fourfolds of this type. The middle cohomology for hypersurfaces of this type 
was studied by Shioda and Katsura using ``inductive structure''
(\cite{SK79}, \cite{Sh79}). 
Let us consider smooth algebraic suraces of degree $d$
\begin{align}
  S_1 : F_1(x_0, x_1, x_2) = x_3^d, \qquad 
  S_1 : F_2(y_0, y_1, y_2) = y_3^d
\end{align}
that are cyclic coverings of $\PP^2$ branched along $C_1$ and $C_2$ respectively. 
We have a dominant rational map $S_1 \times S_2 \dashrightarrow X$ defined by
\begin{align*} 
[x_0: \cdots : x_3] \times [y_0: \cdots : y_3] \mapsto 
[y_3x_0 : y_3x_1 : y_3 x_2 : x_3 y_0 \ x_3 y_1 : x_3 y_2].  
\end{align*}
\begin{tm}[\cite{SK79}, \cite{Sh79}]
 This rational map induces an isomorphism 
  \begin{align*}
   [H_{prim}^2(S_1, \CC) \otimes H_{prim}^2(S_2, \CC)]^{\mu_d} \oplus
   [H^1(C_1, \CC) \otimes H^1(C_2, \CC)](1) \xrightarrow{\sim} H_{prim}^4(X, \CC)
  \end{align*}
where $\mu_d$ acts as 
\begin{align*} 
[x_0: y_1 : x_2 : x_3] \times [y_0: y_1 : y_2 : y_3] \mapsto 
[x_0: y_1 : x_2 : \zeta x_3] \times [y_0: y_1 : y_2 : \zeta y_3].  
\end{align*}
\end{tm}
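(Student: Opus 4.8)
The plan is to realise $X$ birationally as a quotient of $S_1\times S_2$ and to transport cohomology through the resulting correspondence. First I would analyse the rational map $\phi\colon S_1\times S_2\dashrightarrow X$: checking where the six target coordinates vanish simultaneously shows that the indeterminacy locus is exactly the product of branch curves $C_1\times C_2$ (where $x_3=y_3=0$), so blowing it up produces a morphism $\tilde\phi\colon\tilde S:=\mathrm{Bl}_{C_1\times C_2}(S_1\times S_2)\to X$. The group $\mu_d\times\mu_d$ acts on $S_1\times S_2$ through the two covering symmetries $x_3\mapsto\zeta x_3$ and $y_3\mapsto\zeta y_3$; computing $\phi$ shows that a pair $(\zeta_1,\zeta_2)$ fixes the image if and only if $\zeta_1=\zeta_2$, so $\phi$ is invariant exactly under the diagonal $\mu_d$, the group appearing in the statement. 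Counting preimages of a generic point of $X$ confirms that the generic fibre of $\phi$ is a single diagonal $\mu_d$-orbit, so $\tilde\phi$ is generically finite of degree $d$ and $\tilde S/\mu_d\to X$ is birational. Since $\mu_d$ acts trivially on the centre $C_1\times C_2$, the action lifts to $\tilde S$ and $\tilde\phi$ is $\mu_d$-equivariant for the trivial action on $X$.

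Because $\tilde\phi$ is dominant and generically finite of degree $d$, the projection formula gives $\tilde\phi_*\tilde\phi^*=d\cdot\mathrm{id}$, so $\tilde\phi^*$ is a split injection of $H^4(X,\CC)$ into the invariant part $H^4(\tilde S,\CC)^{\mu_d}$, with $\tfrac1d\tilde\phi_*$ as left inverse. The task thus reduces to computing $H^4(\tilde S,\CC)^{\mu_d}$ and recognising which invariant classes survive to $H^4_{prim}(X)$. I would compute $H^4(\tilde S)$ by the blow-up formula, $H^4(\tilde S)\cong H^4(S_1\times S_2)\oplus H^2(C_1\times C_2)(-1)$, where $\mu_d$ acts trivially on the second summand.

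For the first summand I would apply the Künneth formula, using $H^1(S_i)=H^3(S_i)=0$ (Lefschetz, as $S_i\subset\PP^3$ is a smooth surface) and the eigenspace decomposition $H^2(S_i,\CC)=\bigoplus_{k=0}^{d-1}H^2(S_i)^{(k)}$ under the covering $\mu_d$. The key point is that invariant cohomology of a cyclic cover is pulled back from the base, whence $H^2(S_i)^{(0)}=\CC h_i$ (the hyperplane class) and $[H^2_{prim}(S_i)]^{\mu_d}=0$. Consequently the diagonal invariants of $H^2(S_1)\otimes H^2(S_2)$ reduce, apart from the Tate class $h_1\otimes h_2$, to $\bigoplus_{k=1}^{d-1}H^2(S_1)^{(k)}\otimes H^2(S_2)^{(d-k)}=[H^2_{prim}(S_1)\otimes H^2_{prim}(S_2)]^{\mu_d}$, the first summand of the theorem; the surviving Künneth terms $H^0\otimes H^4$ and $H^4\otimes H^0$ are Tate. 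The blow-up contribution $H^2(C_1\times C_2)(-1)$ splits by Künneth into two Tate lines and $[H^1(C_1)\otimes H^1(C_2)](-1)$, which is the second summand; tracking Hodge types through the blow-up formula produces precisely the Tate twist written $(1)$ in the statement (a shift of Hodge type by $(1,1)$).

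Finally I would match this with $H^4_{prim}(X)$. The non-primitive class $h^2$ together with the Tate lines produced above are algebraic noise — classes of the hyperplane section and of the divisors contracted by $\tilde S/\mu_d\to X$ — and they are exactly what $\tilde\phi_*$ annihilates or sends into $\CC h^2$. I expect the main obstacle to be precisely this bookkeeping: one must show that the two interesting summands inject into $H^4_{prim}(X)$ (using injectivity of $\tilde\phi_*$ on the complement of the contracted classes) and that together they exhaust it. I would finish with a dimension count, verifying that $\dim[H^2_{prim}(S_1)\otimes H^2_{prim}(S_2)]^{\mu_d}+\dim\bigl(H^1(C_1)\otimes H^1(C_2)\bigr)$ equals $\dim H^4_{prim}(X)$ via the Hodge numbers of a degree-$d$ fourfold hypersurface (for $d=3$ this reads $18+4=22$); the resulting injective morphism of Hodge structures between spaces of equal dimension is then the asserted isomorphism.
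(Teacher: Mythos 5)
The paper does not actually prove this theorem: it is quoted from Shioda--Katsura \cite{SK79} and Shioda \cite{Sh79}, so there is no in-paper argument to compare against. Your sketch reconstructs the standard proof of the inductive structure, and the skeleton is correct: the indeterminacy locus of the rational map is exactly $C_1\times C_2$ (all six target coordinates vanish only where $x_3=y_3=0$, since $x_0=x_1=x_2=0$ is impossible on $S_1$); one blow-up along this smooth codimension-two centre resolves it; the generic fibre is a single orbit of the diagonal $\mu_d$ (writing a preimage as $x=(z_0,z_1,z_2,t)$, $y=(z_3,z_4,z_5,s)$ with $t^d=F_1(z_0,z_1,z_2)=F_2(z_3,z_4,z_5)=s^d$, the image recovers $z$ iff $s=t$); and the blow-up/K\"unneth/eigenspace bookkeeping you describe is the right way to organize $H^4(\widetilde S,\CC)^{\mu_d}$, including the observation that $[H^2_{prim}(S_i)]^{\mu_d}=0$ because $S_i/\mu_d=\PP^2$.

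The one substantive gap is the step you yourself flag, and it is genuinely the crux rather than bookkeeping. Since $\widetilde\phi$ is not finite --- it contracts $\{x\}\times S_2$ to a point for every $x\in C_1$, and likewise $S_1\times\{y\}$ for $y\in C_2$ --- the invariant part $H^4(\widetilde S,\CC)^{\mu_d}$ is strictly larger than $\widetilde\phi^*H^4(X,\CC)$ (for $d=3$ the dimensions are $27$ versus $23$), so one must show that $\ker\widetilde\phi_*$ meets the two distinguished summands trivially; the dimension count alone cannot give this, because it cannot distinguish which $4$-dimensional piece of the invariants is killed. The clean way to close the gap is to compute the intersection form induced by $\widetilde\phi_*$ on each summand and check nondegeneracy --- exactly what the paper does later for the $H^1(C_1)\otimes H^1(C_2)$ piece in Lemma 3.2, obtaining $\phi(\alpha)\cdot\phi(\beta)=-d(\alpha\cdot\beta)$ --- together with the analogous excess-intersection computation on $[H^2_{prim}(S_1)\otimes H^2_{prim}(S_2)]^{\mu_d}$. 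The remaining unverified item is the numerical identity $\dim[H^2_{prim}(S_1)\otimes H^2_{prim}(S_2)]^{\mu_d}+\dim\bigl(H^1(C_1)\otimes H^1(C_2)\bigr)=\dim H^4_{prim}(X)$ for general $d$; it does hold (it is the character count underlying \cite{SK79}), but it must be checked from the Hodge-number formulas for degree-$d$ hypersurfaces and not only in the case $d=3$.
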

Namely, the Hodge structure of $X$ is determined by that of $C_i$ and $S_i$.   
In a different context, Beauville noted
\begin{prop}[\cite{Be14}]
Let $F$ be a cubic form in 3 variables, such that the curve $F(x,y,z) = 0$ in $\PP^2$ 
is an elliptic curve with complex multiplication. Let $X$ be the cubic fourfold defined 
by $F(x_0, x_1, x_2) = F(y_0, y_1, y_2)$ in $\PP^5$. Then $H^4(X, \ZZ)_{alg}$ has 
maximal rank $h^{2,2}(X)$. 
\end{prop}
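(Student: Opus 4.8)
The plan is to read off the transcendental part of $H^4(X)$ from the Shioda--Katsura isomorphism stated above, and then to show that the complex multiplication hypothesis forces this transcendental part to have the smallest possible rank. Throughout I write $C = C_1 = C_2$ for the elliptic curve $F = 0$ and $S = S_1 = S_2$ for the cyclic cubic surface $F(x_0,x_1,x_2) = x_3^3$. Since $H^4(X,\QQ) = H^4_{prim}(X,\QQ) \oplus \QQ\, h^2$, where $h$ is the hyperplane class, and the power $h^2$ is algebraic, it suffices to control the Hodge classes in the primitive part, and in particular the rank of the transcendental lattice $T(X)$, the minimal $\QQ$--sub-Hodge structure whose complexification contains $H^{3,1}(X)$.

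First I would dispose of the surface summand. A smooth cubic surface is rational, so $H^2(S,\CC)$, and hence $H^2_{prim}(S,\CC)$, is of pure Hodge type $(1,1)$. Consequently $[H^2_{prim}(S_1,\CC)\otimes H^2_{prim}(S_2,\CC)]^{\mu_3}$ is of pure type $(2,2)$ and consists entirely of Hodge classes. By Voisin's theorem that the integral Hodge conjecture holds for cubic fourfolds, these classes are algebraic, so the whole surface summand contributes only to $H^4(X,\ZZ)_{alg}$.

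The heart of the matter is the curve summand $[H^1(C_1,\CC)\otimes H^1(C_2,\CC)](1)$, which carries the $(3,1)$ and $(1,3)$ parts of $H^4_{prim}(X)$ and therefore contains $T(X)$. Here I would use the complex multiplication. If $C$ has CM by the imaginary quadratic field $K = \QQ(\sqrt{-m})$, then $H^1(C,\QQ)$ is a one--dimensional $K$--vector space whose Hodge decomposition is compatible with the $K$--action, so $H^1(C_1,\QQ)\otimes_\QQ H^1(C_2,\QQ)$ is a module over $K\otimes_\QQ K \cong K\times K$, and this idempotent splitting produces two rank--two $\QQ$--sub-Hodge structures $V_+ \oplus V_-$. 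Tracking the four $\CC$--eigenlines indexed by the pairs of embeddings: the factor on which the two copies of $\sqrt{-m}$ act with equal sign gives $V_+$, whose complexification is spanned by $H^{1,0}(C_1)\otimes H^{1,0}(C_2)$ and $H^{0,1}(C_1)\otimes H^{0,1}(C_2)$, so $V_+\otimes\CC$ has Hodge types $(2,0)$ and $(0,2)$ \emph{only}; the opposite--sign factor gives $V_-$, of pure type $(1,1)$. After the Tate twist, the image of $V_-$ is of pure type $(2,2)$ (hence algebraic, again by Voisin), while the image of $V_+$ accounts for the $(3,1)$ and $(1,3)$ classes.

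It remains to observe that the image of $V_+$ is the whole transcendental lattice. A rank--two weight--two $\QQ$--Hodge structure with $h^{2,0}=h^{0,2}=1$ and $h^{1,1}=0$ admits no nonzero proper $\QQ$--sub-Hodge structure, since any rank--one weight--two $\QQ$--sub-Hodge structure would have to be of Tate type $(1,1)$, which is absent; thus $V_+$ is irreducible and equals $T(X)\otimes\QQ$, so $T(X)$ has rank $2$. As $b_4(X) = 23$, the orthogonal complement $H^4(X,\QQ)\cap H^{2,2}(X)$ then has rank $23 - 2 = 21 = h^{2,2}(X)$ and is entirely algebraic, which is the assertion. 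The step I expect to be the main obstacle is the precise CM bookkeeping of the third paragraph: one must verify that the $(2,0)$ and $(0,2)$ lines really lie in a single $\QQ$--rational factor of $K\otimes_\QQ K$, so that $V_+$ carries no $(1,1)$ class. It is exactly this splitting---which fails when $C$ has no complex multiplication---that forces the transcendental rank down to its minimal value $2$.
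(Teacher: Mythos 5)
Your argument is correct, but note that the paper does not actually prove this Proposition: it is quoted verbatim from Beauville's article \cite{Be14}, so there is no in-paper proof to match against. What you have written is essentially Beauville's own reasoning: the cyclic covers $S_i$ are smooth cubic surfaces, hence rational with $H^2$ of pure type $(1,1)$, so the surface summand of the Shioda--Katsura decomposition contributes only Hodge (hence, by the Hodge conjecture for cubic fourfolds, algebraic) classes; the CM splitting $K\otimes_\QQ K\cong K\times K$ cuts $H^1(C_1,\QQ)\otimes H^1(C_2,\QQ)$ into two rank-two sub-Hodge structures, one of pure type $(1,1)$ and one irreducible carrying the $(2,0)$ and $(0,2)$ lines, which forces the transcendental rank down to $2=23-h^{2,2}$. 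Your bookkeeping of the eigenlines is right, and your observation that a rank-one rational sub-Hodge structure must be of type $(p,p)$ correctly yields irreducibility of $V_+$. The only point worth making explicit is that the Shioda--Katsura isomorphism is stated in the paper over $\CC$, while your argument needs the curve summand to be a $\QQ$-rational sub-Hodge structure of $H^4(X,\QQ)$; this is fine because the map is induced by an algebraic correspondence (indeed the paper's own morphism $\phi$ of Theorem \ref{main-th2} realizes it over $\ZZ$), but it should be said. By contrast, the paper's Sections 3 and 4 pursue a finer, complementary goal: they construct $\phi$ geometrically via the divisor $V:F_1=F_2=0$, compute $\phi(\alpha)\cdot\phi(\beta)=-3(\alpha\cdot\beta)$ by an excess-intersection calculation, and prove primitivity of $\mathrm{Im}\,\phi$ by a lattice computation on the Fermat cubic; that machinery determines the transcendental lattice as an integral quadratic form (e.g.\ $\mathrm{U}(3)\oplus\mathrm{U}(3)$ generically), which your Hodge-theoretic rank count cannot see, while your route gives the rank statement of the Proposition more directly.
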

Computing intersection numbers of transcendental cycles coming from $C_1 \times C_2$, 
we determine the transcwndemtal lattice for cubic fourfolds of this type. 
Note that the integral Hodge conjecture is known for cubic fourfolds 
(\cite{V07}). In Section 3 and 4, we show
\begin{tm} \label{main-th2}
Let $X$ be a smooth cubic fourfold defined by the equation {\rm (\ref{eqX})}.
We have a morphism of integral Hodge structures
\begin{align*}
  \phi : [H^1(C_1, \ZZ) \otimes H^1(C_2, \ZZ)](1) 
\longrightarrow H^4(X,\ZZ)
\end{align*}
such that $\phi(\alpha) \cdot \phi(\beta) = -3(\alpha \cdot \beta)$.  
For a general $X$, the transcendental lattice $T_X$ of $X$, that is, 
the orthogonal complement of $H^{2,2}(X) \cap H^4(X, \ZZ)$ is given by 
\[
T_X = \mathrm{Im} \, \phi = \mathrm{U}(3) \oplus \mathrm{U}(3).
\]  
\end{tm}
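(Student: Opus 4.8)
The plan is to produce $\phi$ from an explicit family of lines and then to reduce every assertion to intersection theory on a $\PP^1$-bundle over $C_1 \times C_2$. First I would note that $C_1, C_2 \subset X$ and that for each $([x],[y]) \in C_1 \times C_2$ the entire line $[sx:ty]$, $[s:t] \in \PP^1$, lies on $X$, because $F_1(sx)-F_2(ty)=s^3 F_1(x)-t^3 F_2(y)$ vanishes on $C_1\times C_2$. These lines assemble into a $\PP^1$-bundle $\pi\colon W=\PP(L_1\oplus L_2)\to C_1\times C_2$ with $L_i=p_i^*(\mathcal O_{\PP^2}(1)|_{C_i})$, together with the tautological map $q\colon W\to X$, $q^*\mathcal O_X(1)=\mathcal O_W(1)$. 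I set $\phi=q_*\pi^*$ on the K\"{u}nneth summand $H^1(C_1)\otimes H^1(C_2)\subset H^2(C_1\times C_2)$; as $q_*$ raises complex codimension by one this lands in $H^4(X)$, and being given by an algebraic cycle it is a morphism of integral Hodge structures realizing the curve summand of the Shioda--Katsura decomposition.

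For the pairing I would write $\phi(\alpha)\cdot_X\phi(\beta)=\int_W \pi^*\alpha\cup q^*q_*\pi^*\beta$ by the projection formula. The only delicate input is $q^*q_*$: using that $q$ is generically injective with image a divisor $Y=q(W)$, and that $H^2(X,\ZZ)=\ZZ\cdot H$ forces $[Y]=\lambda H$, I would compute $\lambda$ from $\deg Y=\int_W(q^*H)^3$. The Grothendieck relation on $W$ with $c_1(L_i)=3[\mathrm{pt}]$ gives $\int_W(q^*H)^3=9$, while $\int_X H^4=3$, so $[Y]=3H$. This reduces the pairing to $3\int_W\pi^*(\alpha\cup\beta)\cup q^*H$; since $\alpha\cup\beta$ is the top class $(\alpha\cdot\beta)[\mathrm{pt}]$ on $C_1\times C_2$ and $q^*H$ has degree one on each fiber, the integral equals $(\alpha\cup\beta)_{C_1\times C_2}$. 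Finally the Koszul sign on a product of curves gives $(\alpha\cup\beta)_{C_1\times C_2}=-(\alpha\cdot\beta)$ for the tensor polarization, yielding $\phi(\alpha)\cdot\phi(\beta)=-3(\alpha\cdot\beta)$. The main obstacle is making $q^*q_*$ rigorous: the image $Y$ is singular along $C_1$ and $C_2$ (the two sections of $W$ are contracted there), so I would either pass to a resolution or argue with $\QQ$-coefficients and the refined Gysin map to justify $[Y]=3H$ and the generic injectivity of $q$.

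It remains to identify $\mathrm{Im}\,\phi$ with $T_X$ for general $X$. The cup form on $H^1(C_1,\ZZ)\otimes H^1(C_2,\ZZ)$ is the tensor product of the two symplectic forms, hence isomorphic to $\mathrm{U}\oplus\mathrm{U}$; scaling by $-3$ from the pairing formula gives $\mathrm{Im}\,\phi\cong\mathrm{U}(3)\oplus\mathrm{U}(3)$, and nondegeneracy makes $\phi$ injective. By Shioda--Katsura the surface summand is purely of type $(1,1)$ (the $S_i$ are cubic surfaces), hence algebraic and orthogonal to $\mathrm{Im}\,\phi$, so $\mathrm{Im}\,\phi\subseteq T_X$; moreover $\mathrm{Im}\,\phi$ contains $H^{3,1}\oplus H^{1,3}$ via the $(2,0)+(0,2)$ part of the tensor. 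For general, non-isogenous $C_1,C_2$ one has $\mathrm{Hom}(C_1,C_2)=0$, so the middle $(2,2)$-part carries no rational Hodge class and $\mathrm{rank}\,T_X=4=\mathrm{rank}\,\mathrm{Im}\,\phi$. Equality then follows by matching discriminants of the equal-rank lattices $\mathrm{Im}\,\phi\subseteq T_X$: computing the algebraic lattice shows $|\mathrm{disc}|=81$ on both sides, forcing index one. The integral Hodge conjecture for cubic fourfolds (\cite{V07}) guarantees that the group $H^{2,2}\cap H^4(X,\ZZ)$ used to define $T_X$ is exactly the algebraic lattice, completing the identification $T_X=\mathrm{Im}\,\phi=\mathrm{U}(3)\oplus\mathrm{U}(3)$.
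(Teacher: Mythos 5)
Your overall architecture is the same as the paper's: realize $\phi$ via the $\PP^1$-bundle of lines $[sx:ty]$ over $C_1\times C_2$ (your $W$ is exactly the paper's $\widetilde{V}$, the strict transform of $V:F_1=F_2=0$ in the blow-up of $X$ along $C_1\cup C_2$), compute the pairing by the projection formula, and identify $T_X$ by a rank-plus-discriminant argument using non-isogenous $C_1,C_2$. Two steps, however, have genuine gaps.

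First, the pairing. The identity $q^*q_*\pi^*\beta=\pi^*\beta\cup q^*[Y]$ with $[Y]=3H$ is false, and not for the reason you flag ($[Y]=3H$ and generic injectivity are the easy parts). Factoring $q=\pi\circ\iota$ through the blow-up, one has $q^*q_*\gamma=\gamma\cup\iota^*[\widetilde{V}]$ -- after first proving $\pi^*\pi_*=\mathrm{id}$ on these classes, which itself needs an argument (the paper shows the restriction of the relevant cycle to each exceptional divisor bounds a solid torus). Since $V$ vanishes to order $3$ along each $C_i$, one gets $[\widetilde{V}]=\pi^*[V]-3[E_1]-3[E_2]$, whose degree on a fiber $F$ is $3-3-3=-3$, not $+3$: each contracted section contributes $-3$. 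Your reduction therefore yields $+3(\alpha\cup\beta)_{C_1\times C_2}$ where the correct value is $-3(\alpha\cup\beta)_{C_1\times C_2}$; the Koszul sign you invoke is a statement about which bilinear form on $H^1(C_1)\otimes H^1(C_2)$ you call $(\alpha\cdot\beta)$ and cannot substitute for the missing excess terms. (The isomorphism type $\mathrm{U}(3)\oplus\mathrm{U}(3)$ survives because $\mathrm{U}(-3)\cong\mathrm{U}(3)$, but the stated intersection formula is only reached correctly through the excess computation.)

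Second, and more seriously, the sentence ``computing the algebraic lattice shows $|\mathrm{disc}|=81$ on both sides, forcing index one'' assumes exactly what must be proved. One can compute the discriminant of the lattice generated by explicit algebraic cycles, but to conclude $|\mathrm{disc}(T_X)|=81$ you need that cycle lattice to be saturated in $H^4(X,\ZZ)$; otherwise the index $[T_X:\mathrm{Im}\,\phi]$ equals the unknown index of the cycle lattice in its saturation and nothing is forced. This primitivity is the entire content of the paper's Section 4: it specializes to the Fermat cubic, applies the Degtyarev--Shimada criterion to show the lattice $\mathcal{L}_{\mathcal{K}}$ spanned by certain planes is primitive, verifies by an explicit $19\times 19$ intersection matrix that it has rank $19$ and discriminant $81$ with orthogonal complement exactly $\mathrm{Im}\,\phi$, and then transports primitivity to general $X$ by deformation. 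Your proposal contains no replacement for this input, so the integral identification $T_X=\mathrm{Im}\,\phi$ is not established.
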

Let us recall the following formula by Shioda and Mitani (\cite{SM74}). 
For a  positive definite even latiice 
$T = \begin{bmatrix} 2a & b \\ b & 2c\end{bmatrix} (a,b,c \in \ZZ)$ of rank $2$, 
we have an Abelian surface $C_1 \times C_2$ whose transcendental lattice is $T$, 
where $C_i = \CC/(\ZZ + \ZZ \tau_i)$ and
\begin{align*}
  \tau_1 = \frac{-b + \sqrt{\Delta}}{2a}, \qquad
  \tau_2 = \frac{b + \sqrt{\Delta}}{2} \qquad (\Delta = b^2 -4ac).
\end{align*}
\begin{cor}
Let $F_i(x_0, x_1, x_2) = 0$ be the cubic equation of the above elliptic 
curve $C_i$.  Then the corresponding cubic fourfold $X$ defined by 
$F_1(x) = F_2(y)$ has the transcendental lattice 
$T(-3) =\begin{bmatrix} -6a & -3b \\ -3b & -6c\end{bmatrix}$.
\end{cor}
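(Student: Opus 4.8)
The plan is to combine the Shioda--Mitani description of $C_1\times C_2$ with the map $\phi$ of Theorem~\ref{main-th2}, whose only relevant properties are that it is an \emph{injective morphism of integral Hodge structures} and that it rescales the intersection pairing by $-3$. Writing $A=C_1\times C_2$, the idea is that $\phi$ identifies the transcendental lattice $T_A$ of $A$ with the transcendental lattice $T_X$ of $X$, after multiplying the form by $-3$; plugging in $T_A=T$ from Shioda--Mitani then yields $T_X=T(-3)$.

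First I would record that, for the curves $C_i=\CC/(\ZZ+\ZZ\tau_i)$ with the $\tau_i$ of the Shioda--Mitani formula (\cite{SM74}), the transcendental lattice of $A=C_1\times C_2$ is exactly $T=\begin{bmatrix}2a&b\\b&2c\end{bmatrix}$. Next I would place $T_A$ inside the middle K\"unneth factor: from
\[
 H^2(A,\ZZ)=\big(H^2(C_1,\ZZ)\otimes H^0(C_2,\ZZ)\big)\ \oplus\ \big(H^1(C_1,\ZZ)\otimes H^1(C_2,\ZZ)\big)\ \oplus\ \big(H^0(C_1,\ZZ)\otimes H^2(C_2,\ZZ)\big),
\]
the two outer summands are generated by the algebraic classes $[C_1\times\{pt\}]$ and $[\{pt\}\times C_2]$, so $T_A$, being orthogonal to all algebraic classes and containing $H^{2,0}(A)=H^{1,0}(C_1)\otimes H^{1,0}(C_2)$, is a primitive rank-two sub-Hodge structure of $H^1(C_1,\ZZ)\otimes H^1(C_2,\ZZ)$ on which the cup product restricts to the form $T$.

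I would then transport $T_A$ through $\phi$. Because $\phi$ is a morphism of Hodge structures, $\phi(T_A)$ is a sub-Hodge structure of $H^4(X,\ZZ)$ that sends the line $H^{2,0}(A)$ into $H^{3,1}(X)$; as $h^{3,1}(X)=1$ for a cubic fourfold and $\phi$ is injective, this forces $\phi(H^{2,0}(A))=H^{3,1}(X)$, hence $H^{3,1}(X)\subset \phi(T_A)\otimes\CC$. Since $T_X$ is the smallest primitive sub-Hodge structure of $H^4(X,\ZZ)$ whose complexification contains $H^{3,1}(X)$, we get $T_X\subseteq\overline{\phi(T_A)}$ (the saturation); comparing ranks (both equal $2$, since $T_X\otimes\CC\supseteq H^{3,1}(X)\oplus H^{1,3}(X)$) together with the fact that $T_X$ is saturated gives $T_X=\overline{\phi(T_A)}$. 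Finally, choosing a basis of $T_A$ with Gram matrix $T$ and using $\phi(\alpha)\cdot\phi(\beta)=-3(\alpha\cdot\beta)$ shows that $\phi(T_A)$ has Gram matrix $-3T=\begin{bmatrix}-6a&-3b\\-3b&-6c\end{bmatrix}=T(-3)$.

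The one point that needs care — and the step I expect to be the main obstacle — is upgrading $T_X=\overline{\phi(T_A)}$ to $T_X=\phi(T_A)$, i.e. checking that $\phi(T_A)$ is already saturated, so that no overlattice enlargement shrinks the discriminant. For this I would argue that $\phi$ restricts to a lattice isomorphism $H^1(C_1,\ZZ)\otimes H^1(C_2,\ZZ)\xrightarrow{\sim}\mathrm{Im}\,\phi$ (up to the scalar $-3$), which preserves primitivity of sublattices, and that $\mathrm{Im}\,\phi$ is itself primitive in $H^4(X,\ZZ)$; the latter is the content of the general case of Theorem~\ref{main-th2} (where $\mathrm{Im}\,\phi=T_X=\mathrm{U}(3)\oplus\mathrm{U}(3)$ is saturated) and is a monodromy-invariant, hence locally constant, property of the sublattice $\mathrm{Im}\,\phi\subset H^4(X,\ZZ)$, so it persists for the special isogenous members $X$ treated here. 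Since $T_A$ is primitive in $H^1(C_1,\ZZ)\otimes H^1(C_2,\ZZ)$, it follows that $\phi(T_A)$ is primitive in $\mathrm{Im}\,\phi$, hence in $H^4(X,\ZZ)$, so $\phi(T_A)=\overline{\phi(T_A)}=T_X=T(-3)$. That the algebraic part of $H^4(X,\ZZ)$ is genuinely algebraic, not merely of Hodge type, is guaranteed by the integral Hodge conjecture for cubic fourfolds (\cite{V07}), which legitimizes identifying the orthogonal complement of the $(2,2)$-classes with the transcendental lattice.
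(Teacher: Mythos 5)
Your proof is correct and follows the same route the paper intends: the paper states this corollary without a separate proof, treating it as an immediate consequence of Theorem \ref{main-th2} (the scaling $\phi(\alpha)\cdot\phi(\beta)=-3(\alpha\cdot\beta)$ and the primitivity of $\mathrm{Im}\,\phi$, established at the Fermat point and propagated by deformation) combined with the Shioda--Mitani description of $T_{C_1\times C_2}$. Your write-up simply supplies the details the paper leaves implicit --- the minimality characterization of $T_X$ and the saturation of $\phi(T_A)$ inside $\mathrm{Im}\,\phi$ --- and these are handled correctly.
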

For example, we have $\tau_1 = \tau_2 = \sqrt{-1}$ for $a = c = 1, b=0$.  
The corresponding cubic fourfold $X$ (the case of $j=1728$ in Corollary \ref{cor1}) 
has the transcendental lattice $\begin{bmatrix} -6 & 0 \\ 0 & -6\end{bmatrix}$, 
which was computed as  the transcendental lattice for the cubic fourfold 
with 351 planes in \cite{DIO21}. 
\section{number of planes} 
Let $S$ be a plane in $\PP^5(\CC)$ defined by a system of linear equations 
\begin{align} \label{plane-eq}
\begin{cases}
a_{00} x_0 + a_{01} x_1 + a_{02} x_2 =  b_{00} y_0 + b_{01} y_1 + b_{02} y_2 \\
a_{10} x_0 + a_{11} x_1 + a_{12} x_2 =  b_{10} y_0 + b_{11} y_1 + b_{12} y_2 \\
a_{20} x_0 + a_{21} x_1 + a_{22} x_2 =  b_{20} y_0 + b_{21} y_1 + b_{22} y_2
\end{cases}
\end{align}
of rank 3, which is denoted by $Ax = By$ where $A = [a_{ij}]$ and $B = [b_{ij}]$. 
Let $X$ be a smooth hypersurface in $\PP^5(\CC)$ defined by (\ref{eqX}).
\begin{lem}
We assume that $\deg X \geq 2$ and $S \subset X$. Then, we have \\
{\rm (1)} \ $\mathrm{rank} \, A, \ \mathrm{rank} \, B \geq 2$,
\\
{\rm (2)} \ $\mathrm{rank} \, A = 3 
\quad \Leftrightarrow \quad \mathrm{rank} \, B = 3 
\quad \Rightarrow \quad F_1 \sim F_2$, 
\\
{\rm (3)} \ $\mathrm{rank} \, A = 2 
\quad \Leftrightarrow \quad \mathrm{rank} \, B = 2$. 
\end{lem}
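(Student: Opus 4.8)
The plan is to parametrize $S$ and translate the containment $S \subset X$ into a polynomial identity, then exploit the smoothness of $X$ through the smoothness of the plane curves $C_1, C_2$ of {\rm (\ref{eqC})}. The recurring mechanism, which I will use several times, is the following observation: if $F_2(y) = F_1(My)$ holds identically for some $M \in \mathrm{Mat}_3(\CC)$ with $\mathrm{rank}\, M \le 2$, then $C_2$ is singular. Indeed, $F_1 \circ M$ factors through the image $M(\CC^3)$, a subspace of dimension $\le 2$, so $F_1$ restricted there is a binary form of degree $d$; over $\CC$ it splits into linear factors, whence $F_2 = \prod_{i=1}^d \ell_i(y)$ is a product of $d \ge 2$ linear forms. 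Any two of the lines $\ell_i = 0$ meet in $\PP^2$, and $F_2$ vanishes together with its gradient at such a point, so $C_2$ is singular, contradicting the smoothness of $X$. By symmetry the same holds with the roles of $F_1, F_2$ and of $x, y$ interchanged.

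For part (2), I first suppose $\mathrm{rank}\, A = 3$. Then $A$ is invertible and the equations $Ax = By$ solve as $x = My$ with $M = A^{-1}B$, so $S = \{[My:y]\}$ and the containment $S \subset X$ reads $F_1(My) = F_2(y)$ as an identity in $y$. Since $\mathrm{rank}\, M = \mathrm{rank}\, B$, if $\mathrm{rank}\, B \le 2$ the observation above produces a singular $C_2$, a contradiction; hence $\mathrm{rank}\, B = 3$, i.e. $M \in \GL_3(\CC)$, and $F_1(My) = F_2(y)$ is precisely $F_1 \sim F_2$ with $g = M$. Running the same argument after solving for $y = Nx$, $N = B^{-1}A$, shows $\mathrm{rank}\, B = 3 \Rightarrow \mathrm{rank}\, A = 3$. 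This gives the equivalence together with the implication to $F_1 \sim F_2$.

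For part (1), I rule out $\mathrm{rank}\, A \le 1$. If $\mathrm{rank}\, A = 0$ then $A = 0$, and $\mathrm{rank}\,[A \mid -B] = 3$ forces $B$ invertible, so $By = 0$ gives $y = 0$ and $S$ is the $x$-plane $\{y = 0\}$; but there $F_2(y) = 0$, so $S \subset X$ would force $F_1 \equiv 0$, which is false. If $\mathrm{rank}\, A = 1$, I perform row operations on the system $Ax = By$ (these do not change the plane $S$) to bring $A$ to a single nonzero row; the rank-$3$ condition then makes the two rows with vanishing $A$-part read $b_2 \cdot y = b_3 \cdot y = 0$ with $b_2, b_3$ independent, cutting $y$ down to a line, while the remaining row is $a \cdot x = b_1 \cdot y$ with $a \neq 0$. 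Specializing to the locus $y = 0$ inside $S$ yields the line $\{[x:0] : a \cdot x = 0\} \subset S \subset X$, on which $F_2 = 0$; hence $F_1$ vanishes on the whole line $a \cdot x = 0$, so the linear form $a \cdot x$ divides $F_1$. Then $C_1$ contains a line as a component, and at any point where that line meets the residual curve $\{F_1/(a\cdot x) = 0\}$ (such a point exists by B\'ezout since $d \ge 2$) the gradient of $F_1$ vanishes, so $C_1$ is singular, a contradiction. Thus $\mathrm{rank}\, A \ge 2$, and symmetrically $\mathrm{rank}\, B \ge 2$.

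Finally, part (3) is immediate: by (1) each rank lies in $\{2,3\}$, and by (2) the value $3$ occurs for $A$ exactly when it occurs for $B$, so eliminating gives $\mathrm{rank}\, A = 2 \Leftrightarrow \mathrm{rank}\, B = 2$. I expect the rank-$1$ case of part (1) to be the main obstacle, since it is the only step that needs the row-reduction normalization together with the splitting-into-lines argument to extract a forbidden singular point on $C_1$; everything else reduces cleanly to the single observation about rank-deficient pullbacks.
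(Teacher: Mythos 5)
Your proposal is correct and follows essentially the same route as the paper: translate $S \subset X$ into a polynomial identity, rule out ranks $0$ and $1$ by producing a linear form dividing $F_1$ or $F_2$ (hence a reducible, so singular, plane curve), and in the rank-$3$ case invert one matrix to get $F_1 = F_2 \circ M$ and use smoothness of $C_i$ to force $M \in \GL_3(\CC)$. The only cosmetic differences are that you work with the roles of $A$ and $B$ (equivalently $x$ and $y$) swapped relative to the paper's write-up, and you spell out explicitly why a rank-deficient $M$ forces $F_i$ to split into linear factors, a point the paper leaves implicit.
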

\begin{proof} 
(1) \ If $\mathrm{rank} \, B = 0$, then we have $b_{i,j} = 0$ for all $i, j$
and the system (\ref{plane-eq}) is equivalent to $x_0 = x_1 = x_2 = 0$. 
By the assumption $S \subset X$, we have
\[
 F_1(0,0,0) = F_2(y_0, y_2, y_3)
\]
and this is not the case. 
Let us assume that $\mathrm{rank} \, B = 1$. Then, the system (\ref{plane-eq}) is 
reduced to
\begin{align*}
 \ell_0(x_0, x_1, x_2) &= m_0(y_0, y_1, y_2) \\
 \ell_1(x_0, x_1, x_2) &= 0 \\
 \ell_2(x_0, x_1, x_2) &= 0
\end{align*}  
where $\ell_i$ and $m_0$ are linear forms. By the assumption 
$S \subset X$, there exist homogeneous polynomials $q_i$ of degree $d-1$ such that 
\begin{align*}
  F_1(x) - F_2(y) = q_0(x,y) (\ell_0(x) - m_0(y)) 
+ q_1(x,y)\ell_1(x) + q_2(x,y) \ell_2(x)
\end{align*}
Putting $x_0 = x_1 = x_2 = 0$, we have $F_2(y) = q_0(0,y) m_0(y)$ and 
this contradicts irreducibility of $F_2$. As above, we have $\mathrm{rank} \, B \geq 2$ 
and the same is true for $A$. 
\\
(2) \ Let us assume that $\mathrm{rank} \, B = 3$. 
The plane $S$ is given by $B^{-1}Ax = y$ and we have 
an identity $F_1(x) = F_2(B^{-1}Ax)$ as a polynomial of $x_0, x_1, x_2$.
 Since $X$ is smooth, $F_1(x) = 0$ is a smooth plane curve   
and so is $F_2(B^{-1}Ax) = 0$. We see that $B^{-1}A \in \GL_3(\CC)$ 
(note that $\deg X \ne 1$) and $F_1 \sim F_2$. 
The same is true for the case $\mathrm{rank} \, A = 3$.
\\
(3) \ By (1) and (2),  we see that $\mathrm{rank} \, B = 2$ if and only if 
$\mathrm{rank} \, A = 2$. 
\end{proof}
In the following, we say that {\it a plane $S$ is of rank $k$} if
 $\mathrm{rank} \, A = k$.  
\begin{lem}
If $F_1 \sim F_2$, there exist exactly $|\Aut(F_1)|$ planes of rank $3$ in $X$. 
\end{lem}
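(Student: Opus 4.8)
The plan is to count planes of rank $3$ in $X$ under the hypothesis $F_1 \sim F_2$ by parametrizing them via the previous lemma and then identifying the parameter set with $\Aut(F_1)$.

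By part (2) of the previous Lemma, every rank-$3$ plane $S \subset X$ has $\mathrm{rank}\,B = 3$ as well, so $S$ is given by $y = B^{-1}A\,x$ with $M := B^{-1}A \in \GL_3(\CC)$, and the containment $S \subset X$ is equivalent to the polynomial identity $F_1(x) = F_2(Mx)$. First I would observe that the plane $S$ depends only on $M = B^{-1}A$ and not on the particular pair $(A,B)$: multiplying the system $(\ref{plane-eq})$ on the left by any $P \in \GL_3(\CC)$ replaces $(A,B)$ by $(PA,PB)$ and leaves both the plane and $M = (PB)^{-1}(PA)$ unchanged, while conversely any $M \in \GL_3(\CC)$ arises from the choice $A = M$, $B = I$. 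Hence rank-$3$ planes in $X$ are in bijection with matrices $M \in \GL_3(\CC)$ satisfying $F_1(x) = F_2(Mx)$ identically.

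Next I would bring in the hypothesis $F_1 \sim F_2$. Fix once and for all some $g_0 \in \GL_3(\CC)$ with $F_2(g_0\,x) = F_1(x)$, which exists by definition of $\sim$. Then for any candidate $M$ the identity $F_1(x) = F_2(Mx) = F_2(g_0\,(g_0^{-1}M)x) = F_1(g_0^{-1}M\,x)$ holds if and only if $g_0^{-1}M \in \Aut(F_1)$, i.e. if and only if $M \in g_0\,\Aut(F_1)$. Thus the set of admissible $M$ is exactly the left coset $g_0\,\Aut(F_1)$, which is in bijection with $\Aut(F_1)$. Combined with the bijection of the previous paragraph, this shows the rank-$3$ planes are in bijection with $\Aut(F_1)$, so there are exactly $|\Aut(F_1)|$ of them. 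It is worth noting that $\Aut(F_1)$ is finite for $d \geq 2$ because a smooth plane curve of degree $\geq 2$ has a finite automorphism group acting faithfully, so the count is a genuine finite number.

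The main obstacle I anticipate is the faithfulness/well-definedness bookkeeping in the bijection between planes and matrices $M$: one must check that two distinct cosets representatives give genuinely distinct planes, equivalently that the plane $S$ determines $M$ uniquely. This follows because the projection $[x_0:\cdots:y_2] \mapsto [x_0:x_1:x_2]$ restricts to an isomorphism $S \xrightarrow{\sim} \PP^2$ (as $y$ is a function of $x$ on $S$), so $M$ is recovered from $S$ as the graph data; hence distinct $M$ yield distinct $S$. A secondary point requiring care is the passage from an arbitrary defining system $Ax = By$ to the normalized form $y = Mx$, ensuring no rank-$3$ plane is missed or double-counted — this is precisely handled by the left-multiplication invariance noted above. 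Once these two identifications are pinned down, the count $|\Aut(F_1)|$ is immediate.
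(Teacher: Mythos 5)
Your proof is correct and follows essentially the same route as the paper's: identify a rank-$3$ plane with the matrix $M = B^{-1}A$ satisfying $F_1(x) = F_2(Mx)$, and observe that the admissible $M$ form a coset of $\Aut(F_1)$ (the paper simply normalizes to $F_1 = F_2$ first, so the coset becomes $\Aut(F_1)$ itself); your extra checks of well-definedness are welcome detail the paper omits. One tiny slip in your closing aside: $\Aut(F)$ is \emph{not} finite for a smooth conic ($d=2$), only for $d \geq 3$, but this is harmless since the proposition being proved assumes $d \geq 3$.
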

\begin{proof}
We may assume that $F_1 = F_2$. If $Ax = By$ is a plane of rank 3 in $X$, 
we have $F_1(x) = F_1(B^{-1}Ax)$ and $B^{-1}A \in \Aut(F_1)$. Conversely, $y = gx$ 
is a plane of rank 3 in $X$ for any $g \in \Aut(F_1)$.   
\end{proof}
Before we consider the number of planes of rank 2, we note that
\begin{lem} \label{Lemma-div}
 Let $\phi_1(x_0, x_1)$ and $\phi_2(y_0, y_1)$ be homogeneous polynomials of 
degree $d$. If $\phi_1 - \phi_2$ is devided by $x_1 - y_1$, 
then $\phi_1 - \phi_2$ is a scalar multiple of $x_1^d - y_1^d$. 
\end{lem}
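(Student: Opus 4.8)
The plan is to translate the divisibility hypothesis into a substitution and then exploit a separation-of-variables argument. Working in the polynomial ring $\CC[x_0, x_1, y_0, y_1]$, the linear form $x_1 - y_1$ is irreducible, so by the factor theorem $\phi_1 - \phi_2$ is divisible by $x_1 - y_1$ if and only if it vanishes identically after the substitution $x_1 \mapsto y_1$. Thus the hypothesis is equivalent to the polynomial identity
\begin{align*}
\phi_1(x_0, y_1) = \phi_2(y_0, y_1)
\end{align*}
in the three independent variables $x_0, y_0, y_1$.

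Next I would read off the consequences of this identity. Its left-hand side involves only $x_0$ and $y_1$, while its right-hand side involves only $y_0$ and $y_1$. Regarding both sides as polynomials in $x_0$ with coefficients in $\CC[y_0, y_1]$, the right-hand side is constant in $x_0$, so all coefficients of positive powers of $x_0$ on the left must vanish. Writing $\phi_1(x_0, x_1) = \sum_{k=0}^{d} a_k x_0^k x_1^{d-k}$, this forces $a_k = 0$ for every $k \geq 1$, whence $\phi_1 = a_0 x_1^d$; by the symmetric argument in the variable $y_0$, one gets $\phi_2 = b_0 y_1^d$.

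Finally, substituting these shapes back into the identity gives $a_0 y_1^d = b_0 y_1^d$, hence $a_0 = b_0 =: c$, and therefore
\begin{align*}
\phi_1 - \phi_2 = c\,(x_1^d - y_1^d),
\end{align*}
as claimed. The argument is elementary, and I do not expect a serious obstacle; the only points requiring care are the first step, namely the justification that divisibility by the linear form $x_1 - y_1$ is equivalent to vanishing under $x_1 \mapsto y_1$ (that is, that $x_1 - y_1$ is prime in the multivariate polynomial ring), and keeping the separation of variables in the second step honest, since it is precisely the fact that $x_0$ and $y_0$ are genuinely independent that collapses $\phi_1$ and $\phi_2$ to pure powers of $x_1$ and $y_1$.
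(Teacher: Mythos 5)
Your proof is correct and follows essentially the same route as the paper: both reduce divisibility by $x_1 - y_1$ to the identity obtained by identifying $x_1$ with $y_1$, and then use the independence of $x_0$ and $y_0$ to force both forms to be the same constant times a pure $d$-th power. You merely spell out the separation-of-variables step in more detail than the paper does.
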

\begin{proof}
We have
\begin{align*}
 \phi_1 - \phi_2 \equiv 0 \mod x_1 - y_1 
\qquad &\Leftrightarrow \qquad
 \phi_1(x_0, x_1) - \phi_2(y_0, x_1) = 0 \quad \text{in} \ \CC[x_0,x_1,y_0], 
\end{align*}
and a binary forms $\phi_1(x_0, x_1)$ coincide with $\phi_2(y_0, x_1)$ if and only if 
\[
 \phi_1(x_0, x_1) = \phi_2(y_0, x_1) = \text{const.} \times x_1^d.
\]
\end{proof}
Now let us assume that $\mathrm{rank} \, A = \mathrm{rank} \, B = 2$. 
Then the equation of $S$ is reduced to the following form
\begin{align*}
 \ell_0(x_0, x_1, x_2) &= 0 \\
 \ell_1(x_0, x_1, x_2) &= m_1(y_0, y_1, y_2) \\
                    0 &= m_2(y_0, y_1, y_2)
\end{align*} 
where $\ell_i$ and $m_i$ are linear forms. 
Note that $\ell_0(x)$ and $m_2(y)$ are uniquely determined by $S$,  
up to scalr multiplication. 
\begin{lem}
We assume that $d = \deg X \geq 2$ and $S \subset X$. 
\\
{\rm (1)} \ If $S$ is given by the above form, $\ell_0(x)=0$ is a $d$-flex 
tangent of $C_1$ (we regard $[x_0:x_1:x_2]$ as coordinates of $\PP^2(\CC)$). 
The same is true for $m_2(y)=0$ and $C_2$. 
\\
{\rm (2)} \ Conversely, a pair of $d$-flex tangents of $C_1$ and $C_2$ 
gives $d$ planes of rank $2$ in $X$. 
\\
{\rm (3)} \ There exist exactly $d \nu_1 \nu_2$ planes of rank $2$ in $X$, 
where $\nu_i$ is the number of $d$-flex points of $C_i$.  
\end{lem}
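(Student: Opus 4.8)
The plan is to read off the geometry of a rank-$2$ plane $S$ by restricting the equation $F_1 - F_2 = 0$ to $S$. First I would normalize coordinates: a linear change of the $x_i$ (resp. the $y_i$) replaces $F_1$ (resp. $F_2$) by an equivalent form and carries $d$-flex tangents to $d$-flex tangents, so for part (1) I may assume $\ell_0 = x_2$ and $m_2 = y_2$. Since $\mathrm{rank}\,A = 2$ the restriction $\ell_1|_{\{x_2=0\}}$ is a nonzero linear form in $x_0,x_1$, which I may take to be $x_1$ by a change of $(x_0,x_1)$, and likewise $m_1|_{\{y_2=0\}}$ to be $y_1$ by a change of $(y_0,y_1)$; both changes fix $x_2$ and $y_2$. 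On $S$ we then have $x_2 = y_2 = 0$ and $x_1 = y_1$, so writing $\phi_1(x_0,x_1) = F_1(x_0,x_1,0)$ and $\phi_2(y_0,y_1) = F_2(y_0,y_1,0)$, the inclusion $S \subset X$ becomes the statement that $\phi_1 - \phi_2$ is divisible by $x_1 - y_1$. Lemma \ref{Lemma-div} then forces $\phi_1 = \kappa x_1^d$ and $\phi_2 = \kappa y_1^d$; in particular $F_1$ restricted to the line $\ell_0 = 0$ is $\kappa x_1^d$, which vanishes only at $[1:0:0]$ and to order $d$. As $X$ is smooth the curve $C_1$ is smooth, so this is a smooth point whose tangent is $\ell_0 = 0$ with contact multiplicity $d \geq 3$, i.e. a $d$-flex tangent; the argument for $m_2 = 0$ and $C_2$ is identical. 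This gives (1).

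For part (2) I would run the construction in reverse. Given $d$-flex tangents $L_1$ of $C_1$ at $P_1$ and $L_2$ of $C_2$ at $P_2$, choose coordinates with $L_1 = \{x_2 = 0\}$, $P_1 = [1:0:0]$ and $L_2 = \{y_2 = 0\}$, $P_2 = [1:0:0]$; because the contact order equals the degree, $F_1(x_0,x_1,0) = c_1 x_1^d$ and $F_2(y_0,y_1,0) = c_2 y_1^d$ with $c_1, c_2 \neq 0$. For each scalar $\eta$ with $\eta^d = c_2/c_1$ the plane defined by $x_2 = 0$, $x_1 = \eta y_1$, $y_2 = 0$ lies in $X$ and has rank $2$; the $d$ distinct values of $\eta$ give $d$ distinct planes, as required.

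For part (3) I would assemble a count. By (1) each rank-$2$ plane $S$ determines the ordered pair $(L_1, L_2)$ of its flex tangents, well defined because $\ell_0$ and $m_2$ are each determined by $S$ up to scalar, and a $d$-flex tangent and its flex point determine one another (the full intersection of a degree-$d$ curve with a line of contact order $d$ is already concentrated at the point of tangency), so there are exactly $\nu_1$ choices for $L_1$ and $\nu_2$ for $L_2$. It then suffices to show each pair $(L_1, L_2)$ arises from exactly $d$ planes. Existence is part (2); for the reverse bound I would normalize as above and note that, since $F_1 - F_2$ vanishes on $S$, its restriction to $\{x_2 = y_2 = 0\}$ shows that the middle equation, a linear form $a x_0 + b x_1 - a' y_0 - b' y_1$, must divide $\phi_1 - \phi_2 = c_1 x_1^d - c_2 y_1^d$. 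The irreducible factors of the right-hand side are precisely the forms $x_1 - \eta y_1$ with $\eta^d = c_2/c_1$, so the middle equation is forced to have $a = a' = 0$ and $b'/b$ one of these $d$ roots; hence there are exactly $d$ such planes and the total is $d \nu_1 \nu_2$. I expect the genuine content to lie in this last uniqueness step: controlling the coupling linear form and ruling out $x_0, y_0$ contributions, which is exactly where the perfect-power structure provided by Lemma \ref{Lemma-div} is indispensable.
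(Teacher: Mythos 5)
Your proof follows the paper's argument essentially step for step: normalize the plane to $x_2 = 0$, $x_1 = y_1$, $y_2 = 0$, invoke Lemma \ref{Lemma-div} to force each $F_i$ to restrict to a perfect $d$-th power on the distinguished line, and then reverse the construction using the $d$ scalars $\eta$ with $\eta^d = c_2/c_1$. Your part (3) is in fact slightly more complete than the paper's one-line conclusion, since the factorization of $c_1x_1^d - c_2y_1^d$ into the linear forms $x_1 - \eta y_1$ pins down the middle equation of the plane and shows that each ordered pair of $d$-flex tangents yields \emph{exactly} $d$ planes, a uniqueness point the paper leaves implicit.
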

\begin{proof}
(1) \ For simplicity, we take projectve coodinates as  
\begin{align*}
 \ell_0(x_0, x_1, x_2) = x_2, \quad
 \ell_1(x_0, x_1, x_2) = x_1, \quad m_1(y_0, y_1, y_2)=y_1, \quad
 m_2(y_0, y_1, y_2) = y_2
\end{align*}
(this is possible by the action of $\GL_3(\CC) \times \GL_3(\CC)$, since 
$\mathrm{rank} \, A = \mathrm{rank} \, B = 2$). Namely, we assume that $S$ 
is given by
\[
 x_2 = 0, \quad x_1 - y_1 = 0, \quad y_2 = 0.  
\]
By the assumption $S \subset X$, we see that 
$F_1(x_0, x_1, 0) - F_2(y_0, y_1, 0)$ is divided 
by $x_1 - y_1$. By lemma \ref{Lemma-div}, we have 
\[
 F_1(x_0, x_1,0) = cx_1^d, \quad F_2(y_0, y_1, 0) = cy_1^d
\] 
where $c$ is a constant. This implies that $x_2 = 0$ is a $d$-flex tangent 
of $C_1$ and $y_2 = 0$ is a $d$-flex tangent of $C_2$. 
\\
(2) \ Let $\ell_0(x) = 0$ (resp. $m_2(y) = 0$) be a $d$-flex tangent of  
$C_1$ (resp. $C_2$). 
There exist homogeneous polynomials $G_1,\, G_2$ of degree $d-1$ and linear forms 
$\ell_1,\, m_1$ such that
\[
 F_1(x) = \ell_0(x) G_1(x) + \ell_1(x)^d, \quad
F_2(y) = m_2(y) G_2(y) + m_1(y)^d.
\]
Since $F_1$ is irreducible, $\ell_0$ is not a scalar multiple of $\ell_1$. 
The same is true for $m_1$ and $m_2$. We see that a system of linear equations
\begin{align*} 
 \ell_0(x_0, x_1, x_2) &= 0 \\
 \ell_1(x_0, x_1, x_2) &= \zeta m_1(y_0, y_1, y_2) \\
                    0 &= m_2(y_0, y_1, y_2)
\end{align*} 
gives a plane in $X$ for any  $\zeta \in \mu_d$. 
\\
(3) \ By the above construction, we see that the number of planes of 
rank $2$ in $X$ is $d \nu_1 \nu_2$.
\end{proof}
Proposition \ref{main-th1} follows from Lemma 2.1, 2.2 and  2.4. 
In the last of this section, we consider the case of $d=3$.
It is well known that a smooth cubic curve has nine inflection points.
Hence a smooth cubic 4-fold $X$ of this type contains $3 \times 9 \times 9 = 243$ 
planes of rank 2. In the case of $F_1 \sim F_2$, we have extra $|\Aut(F_1)|$ planes 
of rank 3. To see automorphisms of ternary cubic forms, the Hesse normal form
\begin{align} \label{Hesse}
 F(x_0, x_1, x_2) = x_0^3 + x_1^3 + x_2^3 - 3 \lambda x_0 x_1 x_2  
\end{align}
is usefull (see, e.g. \cite{BM17}). It defines a smooth cubic curve 
if $\lambda \notin \mu_3$, and we have the j-invariant 
\[
 j = 1728 \frac{\lambda^3 (\lambda^3 + 8)^3}{64(\lambda^3 - 1)^3} 
= 1728 \frac{g_2^3}{g_2^3 - 27 g_3^2}.
\]
In general, the group 
$\Aut(F) / \mu_3 \subset \mathrm{PGL}_3(\CC)$ is of order 18 generated by 
permutations of coordinates and an automorphism
\begin{align*}
 (x_0, x_1, x_2) \mapsto (x_0, \omega x_1, \omega^2 x_2) 
\end{align*}
where $\omega = e^{2 \pi i/3}$ (If we take a flex point as the orgin $O$, 
this is the group generated by the inversion and translations by 3-tosions). 
As is well known, we have extra automorphisms for $j = 0, 1728$. 
For example, if $\lambda = 0$ (and hence $j = 0$), we have an automorphism
\begin{align*}
 (x_0, x_1, x_2) \mapsto (x_0, x_1, \omega x_2) 
\end{align*}
of order 3. As an example for $j = 1728$, if $\lambda = 1 + \sqrt{3}$, we have 
an automorphism 
\begin{align*}
  M = -\frac{1}{\sqrt{3}} \begin{bmatrix} 1 & 1 & 1 \\ 
1 & \omega & \omega^2 \\ 1 & \omega^2 & \omega \end{bmatrix} \in \GL_3(\CC)
\end{align*}
of order 4. 
\begin{rem}
For the Hesse normal form, we have the following flex points and tangent lines.   
\begin{align*}
\begin{array}{c|c|| c|c} 
\text{$[0 : -1 : 1]$} & \lambda x_0 + x_1 + x_2 = 0 & 
\text{$[1: 0 : -\w^2]$} & x_0 + \lambda w^2 x_1 + \w x_2 = 0\\
\text{$[0 : -\w : 1]$} & \w \lambda x_0 + \w^2 x_1 + x_2 = 0 &
\text{$[-1 : 1 : 0]$} & x_0 + x_1 + \lambda x_2 = 0 \\
\text{$[0 : -\w^2: 1]$} & \w^2 \lambda x_0 + \w x_1 + x_2 = 0 &
\text{$[-\w : 1 : 0]$} & \w^2 x_0 + x_1 + \lambda \w x_2 = 0 \\
\text{$[1 : 0 : -1]$} & x_0 + \lambda x_1 + x_2 = 0 &
\text{$[-\w^2 : 1: 0]$} & \w x_0 + x_1 + \lambda \w^2 x_2 = 0 \\
\text{$[1 : 0 : -\w]$} & x_0 + \lambda \w x_1 + \w^2 x_2 = 0 \\
\end{array}
\end{align*}
\end{rem}
\section{transcendental cycles}
\subsection{transcendental lattice}
We compute intersection numbers of topological 4-cycles in $X$ coming from 
$C_1 \times C_2$. Let us consider a divisor  
\begin{align*}
 V : F_1(x) = F_2(y) = 0 
\end{align*}
of $X$. It is singular along $C_1 \cup C_2$, and the projection
\begin{align*}
 p : V - C_1 \cup C_2 \longrightarrow C_1 \times C_2, \qquad
[x:y] \mapsto [x_0:x_1:x_2] \times [y_0:y_1:y_2] 
\end{align*}
gives a structure of $\CC^{*}$-bundle.  
Let $\pi : \widetilde{X} \rightarrow X$
be the blow up of $X$ along $C_1 \cup C_2$ and $\widetilde{V}$ be 
the strict transform of $V$. Then we have a $\PP^1$-bundle
$p : \widetilde{V} \rightarrow C_1 \times C_1$. 
\begin{center}
\begin{picture}(270,135)(0,0)
\put(20,-20){$\widetilde{V}$} \put(175,-10){$V$} 
\thicklines
\multiput(0,0)(0,50){3}{\line(1,0){50}}
\multiput(20,20)(0,50){3}{\line(1,0){50}}
\multiput(0,0)(0,50){3}{\line(1,1){20}}
\multiput(50,0)(0,50){3}{\line(1,1){20}}
\thinlines
\multiput(0,0)(50,0){2}{\line(0,1){100}}
\multiput(20,20)(50,0){2}{\line(0,1){100}}
\multiput(0,0)(5,0){10}{\line(1,1){20}}
\put(70,10){$E_1 \cap \widetilde{V}$}
\multiput(0,100)(3,3){7}{\line(1,0){50}}
\put(70,115){$E_2 \cap \widetilde{V}$}
\put(105,65){$\pi$}
\put(95,60){\vector(1,0){30}}
\thicklines
\multiput(150,50)(20,20){2}{\line(1,0){50}}
\put(160,10){\line(1,0){50}} \put(175,100){\line(1,1){20}}
\multiput(150,50)(50,0){2}{\line(1,1){20}}
\thinlines
\multiput(150,50)(20,20){2}{\line(1,2){25}}
\multiput(200,50)(20,20){2}{\line(-1,2){25}}
\multiput(150,50)(50,0){2}{\line(1,-4){10}}
\multiput(160,10)(50,0){2}{\line(1,6){10}}
\put(215,10){$C_1$} \put(200,115){$C_2$}
\put(222,60){$C_1 \times C_2$} 
\end{picture}
\end{center}
\vskip1cm
Let $\iota : \widetilde{V} \rightarrow \widetilde{X}$ be the inclusion map. 
We have the following diagram
\begin{align} \label{diagram}
\begin{CD}
\widetilde{V} @>\iota>> \ \widetilde{X} @>\pi>> X\\
@VpVV \\
C_1 \times C_2     
\end{CD}
\end{align}
and a morphism of integral Hodge structures
\begin{align*} 
 (\pi \circ \iota)_* \circ p^* : 
H^2(C_1 \times C_2, \ZZ)(1) \longrightarrow H^4(X, \ZZ)
\end{align*}
where the pushforward map $(\pi \circ \iota)_*$ is defined via the Poincar\'e duality
(In the following, we often identify cohomology groups and homology groups). 
Our interset is the restriction on a K\"unneth component
\begin{align} \label{morphism}
\phi : 
  [H^1(C_1, \ZZ) \otimes H^1(C_2, \ZZ)](1) \longrightarrow H^4(X, \ZZ). 
\end{align}
\begin{lem}
Let $\gamma_i$ be a topological 1-cycle on $C_i$ and put
\begin{align*}
  \Gamma = p^{-1}(\gamma_1 \times \gamma_2) 
\subset \widetilde{V} \subset \widetilde{X}. 
\end{align*} 
For $[\Gamma] \in H^4(\widetilde{X}, \ZZ)$, 
we have $\pi^* \pi_* [\Gamma] = [\Gamma]$. 
\end{lem}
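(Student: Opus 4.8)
The plan is to reduce the identity $\pi^*\pi_*[\Gamma]=[\Gamma]$ to a vanishing statement on the two exceptional divisors and then to verify that vanishing by a direct homological argument exploiting the fact that the fibres of the exceptional divisors are simply connected. Since $C_1$ and $C_2$ are disjoint smooth curves of codimension $3$ in the smooth fourfold $X$, the blow-up $\pi:\widetilde X\to X$ has two disjoint exceptional divisors $E_i=\PP(N_{C_i/X})$, each a $\PP^2$-bundle $\rho_i:E_i\to C_i$ with inclusion $j_i:E_i\hookrightarrow\widetilde X$. I would first recall the blow-up decomposition $H^4(\widetilde X,\ZZ)=\pi^*H^4(X,\ZZ)\oplus\bigoplus_{i}\big(j_{i*}\rho_i^*H^2(C_i)\oplus\ZZ\,j_{i*}(\xi_i)\big)$, where $\xi_i=c_1(\mathcal O_{E_i}(1))$, together with the facts that $\pi_*$ annihilates the exceptional summands while $\pi_*\pi^*=\mathrm{id}$. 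Writing a class as $\alpha=\pi^*a+e$ with $e$ exceptional gives $\pi^*\pi_*\alpha=\pi^*a$, so $\pi^*\pi_*$ is exactly the projection onto $\pi^*H^4(X,\ZZ)$ and the assertion is equivalent to showing that $[\Gamma]$ has no exceptional component.

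Next I would detect the exceptional component by restriction to the $E_i$. Using $N_{E_i/\widetilde X}=\mathcal O_{E_i}(-1)$ and the self-intersection formula one computes $j_i^*j_{i*}(\rho_i^*\beta)=-\xi_i\rho_i^*\beta$ and $j_i^*j_{i*}(\xi_i)=-\xi_i^2$, which are nonzero and independent in $H^4(E_i,\ZZ)$, whereas $j_i^*\pi^*a=\rho_i^*(a|_{C_i})=0$ because $H^4(C_i)=0$, and $j_1^*$ kills the $E_2$-classes since $E_1\cap E_2=\emptyset$. Hence the full exceptional part injects under $(j_1^*,j_2^*)$ while $\pi^*H^4(X)$ maps to zero, so the exceptional component of $[\Gamma]$ vanishes if and only if $j_i^*[\Gamma]=0$ in $H^4(E_i,\ZZ)$ for $i=1,2$. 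It therefore suffices to prove that the restriction of $[\Gamma]$ to each $E_i$ is trivial.

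For the geometric computation, note that because $\Gamma\subset\widetilde V$ we have $\Gamma\cap E_i=\Gamma\cap(\widetilde V\cap E_i)$, and $\widetilde V\cap E_i=\Sigma_i$ is the section of the $\PP^1$-bundle $p:\widetilde V\to C_1\times C_2$ lying over $C_i$ (the locus $E_i\cap\widetilde V$ in the figure), so that $\Sigma_i\cong C_1\times C_2$. Thus $\Gamma\cap E_i=p^{-1}(\gamma_1\times\gamma_2)\cap\Sigma_i\cong\gamma_1\times\gamma_2$, a real $2$-cycle, and $j_i^*[\Gamma]\in H^4(E_i)\cong H_2(E_i)$ is represented by the class of this $2$-cycle in $E_i$. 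The crucial point is that under $\rho_1:E_1\to C_1$ the surface $\Sigma_1$ maps by the first projection $C_1\times C_2\to C_1$, so the factor $\{c_1\}\times C_2$ sits inside the $\PP^2$-fibre $\rho_1^{-1}(c_1)$; in particular $\gamma_2$ lies in a fibre $\PP^2$. Since $H_1(\PP^2,\ZZ)=0$ we may write $\gamma_2=\partial D$ for a $2$-chain $D$ in that fibre, and then $\gamma_1\times\gamma_2=\pm\,\partial(\gamma_1\times D)$ is null-homologous in $E_1$. Exchanging the roles of $C_1$ and $C_2$ gives $j_2^*[\Gamma]=0$ as well, which by the previous paragraph yields $\pi^*\pi_*[\Gamma]=[\Gamma]$.

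The step requiring the most care is the identification of $j_i^*[\Gamma]$ with the fundamental class of the intersection cycle $\Gamma\cap E_i$: the intersection of $\Gamma$ with $E_i$ is not transverse in $\widetilde X$, since both meet inside $\widetilde V$, so a priori one must control a possible excess contribution. I expect this to be harmless because $j_i^*[\Gamma]$ is in any case represented by a $2$-cycle supported on the $2$-dimensional set $\gamma_1\times\gamma_2$, hence is an integral combination of its components, each null-homologous in $E_i$ by the fibrewise bounding argument above; thus the excess issue does not affect the conclusion.
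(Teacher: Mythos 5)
Your argument is correct and follows essentially the same route as the paper's: both reduce the identity $\pi^*\pi_*[\Gamma]=[\Gamma]$ to the vanishing of the restriction of $[\Gamma]$ to each exceptional divisor $E_i$, and both obtain that vanishing by observing that $\gamma_1\times\gamma_2$ bounds in $E_i$ because $\gamma_2$ sits inside a simply connected $\PP^2$-fibre. The only differences are in bookkeeping: where the paper invokes the exact sequence $H^4(X,\ZZ)\to H^4(\widetilde X,\ZZ)\to H^4(E,\ZZ)$ and verifies transversality of $\Gamma$ and $E_i$ in local coordinates, you spell out the blow-up decomposition (checking injectivity of the exceptional summands under restriction) and sidestep transversality with a support argument --- both are sound.
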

\begin{proof}
It is enough to show in the case that $\gamma_1$ and $\gamma_2$ are smooth and 
isomorphic to a circle $S^1$ since general 1-cycles are homologous to $\ZZ$-linear 
combinations of them. Namely, we assume that $\Gamma$ is a $\PP^1(\CC)$-bundle 
over $S^1 \times S^1$.  
\\ \indent
Let $E_i = \pi^{-1}(C_i)$ be the exceptional divisor. First we show that 
$\Gamma$ and $E_i$ intersect transversally in $\widetilde{X}$. 
Since $C_1 \cap C_2 = \phi$ and the problem is local, we consider the 
blow up along only $C_1$. Then $\widetilde{X}$ is realized in 
$\PP^5(\CC) \times \PP^2(\CC)$ as 
\begin{align*}
F_1(x_0, x_1, x_2) = F_2(y_0, y_1, y_2), \qquad
 \mathrm{rank} \begin{bmatrix} y_0 & y_1 & y_2 \\ 
t_0 & t_1 & t_2 \end{bmatrix} = 1
\end{align*}
where $[t_0:t_1:t_2] \in \PP^2(\CC)$. In an affine open set given by 
$x_0 = 1$ and $t_0 = 1$ (that is $y_1 = t_1 y_0, \ y_2 = t_2 y_0$), we have
\begin{align*}
\widetilde{X} &: F_1(1, x_1, x_2) = y_0^d F_2(1, t_1, t_2)
\\
\widetilde{V} &: F_1(1, x_1, x_2) = F_2(1, t_1, t_2) = 0
\\
E_1 &: F_1(1, x_1, x_2) = y_0 = 0
\end{align*}
where $(x_1, x_2, y_0, t_1, t_2) \in \CC^5$.  The exceptional divisor $E_1$ is 
given by $y_0 = 0$ in $\widetilde{X}$, and $y_0$ is a (complex) local coordinate 
for the fiber direction of $\widetilde{V}$. We have same situations in other affine 
charts. From this, we see transversality of $E_1$ and $\Gamma$, that is, 
$T_xE_1 + T_x \Gamma = T_x \widetilde{X}$ for 
$x \in E_1 \cap \Gamma$. 
\\ \indent
Now we have $E_1 \cap \Gamma \cong \gamma_1 \times \gamma_2 = S^1 \times S^1$ 
(a smooth section of a fiber bundle $\Gamma \rightarrow S^1 \times S^1$), and 
\begin{align*}
 i_1^* [\Gamma] = [\Gamma \cap E_1] \in H_2(E_1, \ZZ) \cong H^4(E_1, \ZZ)
\end{align*}
where $i_1 : E_1 \rightarrow \widetilde{X}$ 
is the inclusion map. Note that the projection $\pi$ gives fibrations 
\begin{align*}
\begin{array}{cccl}
E_1 & \longrightarrow & C_1 & \ \ \PP^2(\CC) \text{-bundle} \\    
\cup & & \cup \\
(\Gamma \cap E_1) & \longrightarrow & \gamma_1 = S^1 & \ \ 
\gamma_2 \text{-bundle} \ (S^1 \text{-bundle}) 
\end{array} 
\end{align*}
and $\Gamma \cap E_1$ is the boudary of a solid torus in $E_1$. 
Therefore we have $i_1^*[\Gamma] = 0$, and the same is true for the 
pull-back to $H_2(E_2, \ZZ)$. 
This implies that $[\Gamma] = \pi^* \alpha$ for some $\alpha \in H^4(X, \ZZ)$ since 
there is an exact sequence
\begin{align*}
H^4(X, \ZZ) \longrightarrow H^4(\widetilde{X}, \ZZ) 
\longrightarrow H^4(E, \ZZ).
\end{align*}
Therefore we have $\pi^* \pi_* [\Gamma] = \pi^* \pi_* \pi^* \alpha = \pi^* \alpha 
= [\Gamma]$. 
\end{proof}
\begin{lem} \label{trans}
{\rm (1)} \ The morphism $\phi$ in {\rm (\ref{morphism})} satisfies  
\begin{align*}
\phi(\alpha) \cdot \phi(\beta) = -(\deg X)(\alpha \cdot \beta). 
\end{align*}
In particular, the image $\rm{Im} \, \phi$ is a sublattice of rank $4g^2$ with 
the intersection form
\begin{align*}
\begin{bmatrix}0 & d \\ d & 0\end{bmatrix} \oplus \cdots \oplus
\begin{bmatrix}0 & d \\ d & 0\end{bmatrix} \ (2g^2 \ \text{\rm{times}})
\end{align*}
where $d = \deg X$ and $g = \frac{(d-1)(d-2)}{2}$ is the genus of $C_i$.
\\ 
{\rm (2)} \ For a plane $S \subset X$ of rank $2$, the class $[S] \in H^4(X, \ZZ)$ 
is orthogonal to $\rm{Im} \, \phi$. 
\end{lem}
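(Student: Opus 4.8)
The plan is to compute every intersection number by pushing the relevant cycles down onto the $\PP^1$-bundle $\widetilde{V}$ and reducing part (1) to the degree of a single normal bundle along a fibre. By bilinearity of $\phi$ and the fact that $H^1(C_1,\ZZ)\otimes H^1(C_2,\ZZ)$ is spanned by products, it suffices to treat classes $\alpha,\beta$ that are Poincar\'e dual to $\gamma_1\times\gamma_2$ and $\gamma_1'\times\gamma_2'$, with $\Gamma_\alpha = p^{-1}(\gamma_1\times\gamma_2)$ and $\Gamma_\beta = p^{-1}(\gamma_1'\times\gamma_2')$, so that $\phi(\alpha)=\pi_*[\Gamma_\alpha]$ and $\phi(\beta)=\pi_*[\Gamma_\beta]$. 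Since $\pi$ is birational one has $\int_X u\cup v = \int_{\widetilde{X}}\pi^*u\cup\pi^*v$, and the preceding lemma gives $\pi^*\pi_*[\Gamma]=[\Gamma]$; combining these I would obtain $\phi(\alpha)\cdot\phi(\beta)=\int_{\widetilde{X}}[\Gamma_\alpha]\cup[\Gamma_\beta]$.

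Both cycles lie on the divisor $\widetilde{V}$, which is exactly the excess-intersection feature that must be handled with care: rather than move cycles into general position I would route through the embedding $\iota$. The projection formula for $\iota$ together with the self-intersection formula $\iota^*\iota_*(\,\cdot\,)=(\,\cdot\,)\cup c_1(N_{\widetilde{V}/\widetilde{X}})$ yields
\[
\phi(\alpha)\cdot\phi(\beta)=\int_{\widetilde{V}}[\Gamma_\alpha]_{\widetilde{V}}\cup[\Gamma_\beta]_{\widetilde{V}}\cup c_1(N_{\widetilde{V}/\widetilde{X}}).
\]
Writing $[\Gamma_\alpha]_{\widetilde{V}}=p^*\alpha$, $[\Gamma_\beta]_{\widetilde{V}}=p^*\beta$ and using $\alpha\cup\beta=(\alpha\cdot\beta)\,\omega$ (where $\omega$ is the fundamental class of $C_1\times C_2$), this reduces to $(\alpha\cdot\beta)\int_{\widetilde{V}}p^*\omega\cup c_1(N_{\widetilde{V}/\widetilde{X}})$, which is $(\alpha\cdot\beta)$ times the degree of $N_{\widetilde{V}/\widetilde{X}}$ restricted to a fibre $\widetilde{L}$ of $p$.

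The crux is this fibre degree. The fibres of $p$ are the strict transforms of the rulings $L$ of $V$, namely the lines joining a point of $C_1$ to a point of $C_2$. I would identify the line bundle $\mathcal{O}_{\widetilde{X}}(\widetilde{V})$ as follows: on $X$ the divisor $V=X\cap\{F_1=0\}$ has class $dH$, and since $F_2$ is homogeneous of degree $d$ the threefold $V$ has multiplicity exactly $d$ along each $C_i$ (locally $V\cong C_i\times\{F_j=0\}$, and the affine cone $\{F_j=0\}$ has multiplicity $d$ at its vertex), so $\pi^*V=\widetilde{V}+dE_1+dE_2$ and $\mathcal{O}_{\widetilde{X}}(\widetilde{V})=\pi^*\mathcal{O}_X(dH)\otimes\mathcal{O}(-dE_1-dE_2)$. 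Restricting to $\widetilde{L}$ and using $H\cdot L=1$ together with $\widetilde{L}\cdot E_i=1$ gives $\deg N_{\widetilde{V}/\widetilde{X}}|_{\widetilde{L}}=d-d-d=-d$, hence $\phi(\alpha)\cdot\phi(\beta)=-d(\alpha\cdot\beta)$. The rank and intersection matrix then follow formally: the pairing on $H^1(C_1,\ZZ)\otimes H^1(C_2,\ZZ)$ is an orthogonal sum of $2g^2$ hyperbolic planes, multiplication by $-d$ turns each into $\begin{bmatrix}0&d\\ d&0\end{bmatrix}$, and nondegeneracy forces $\phi$ to be injective, so $\mathrm{Im}\,\phi$ has rank $4g^2$.

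For part (2) I would use the normal form from Lemma~2.4: after a coordinate change the rank-$2$ plane $S$ is $x_2=0$, $x_1=\zeta y_1$, $y_2=0$ with $F_1(x_0,x_1,0)=cx_1^d$ and $F_2(y_0,y_1,0)=cy_1^d$. A direct substitution shows that $S\cap V$ is the single line $\{[x_0:0:0:y_0:0:0]\}$, which is precisely the ruling fibre of $p$ over the pair of $d$-flex points $(P_1,P_2)$. Since $\phi(\alpha)$ is represented by $\pi_*\Gamma_\alpha\subset V$ and $S\cap V$ is that one fibre, I can choose the $1$-cycles $\gamma_i$ to avoid the points $P_i$; the resulting representative of $\phi(\alpha)$ is then disjoint from $S$, so $[S]\cdot\phi(\alpha)=0$ on generators and hence $[S]$ is orthogonal to $\mathrm{Im}\,\phi$. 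The step I expect to be the main obstacle is establishing $\mathrm{mult}_{C_i}V=d$ and the clean identity $\pi^*V=\widetilde{V}+dE_1+dE_2$, since the whole value $-d$ hinges on it and on correctly treating the two cycles as living on the divisor $\widetilde{V}$ via the self-intersection formula.
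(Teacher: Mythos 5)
Your proposal is correct and follows essentially the same route as the paper: reduce via $\pi^*\pi_*[\Gamma]=[\Gamma]$ and the projection formula to an excess-intersection computation on the divisor $\widetilde{V}$, evaluate $\iota^*[\widetilde{V}]=c_1(N_{\widetilde{V}/\widetilde{X}})$ on a fibre using $[\widetilde{V}]=d(\pi^*[H]-[E_1]-[E_2])$ to get $-d$, and for (2) choose the $\gamma_i$ away from the $d$-flex points so the representing cycles miss $S$. Your explicit justification of $\mathrm{mult}_{C_i}V=d$ and of $S\cap V$ being a single ruling line is a welcome elaboration of steps the paper only asserts.
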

\begin{proof}
(1) \ For $\alpha, \ \beta \in H^1(C_1, \ZZ) \otimes H^1(C_2, \ZZ)$, 
we denote $p^{*} \alpha$ and $p^* \beta$ by $A$ and $B$. 
By the projection formula and the previous Lemma, we have
\begin{align*}
\phi(\alpha) \cdot \phi(\beta) &= \pi_*(\iota_*A) \cdot \pi_*(\iota_*B) \\ 
&= \pi_*((\pi^* \pi_* \iota_*A) \cdot \iota_*B)
= \pi_*(\iota_*A \cdot \iota_*B).
\end{align*}
We can compute this excess intersection by
\begin{align*}
 (\iota_*A \cdot \iota_*B)_{\widetilde{X}} 
= (A \cdot B \cdot \iota^*[\widetilde{V}])_{\widetilde{V}} 
= (p^{*} \alpha \cdot p^{*} \beta \cdot \iota^*[\widetilde{V}])_{\widetilde{V}} 
\end{align*}
where $(- \cdot -)_M$ represents the intersection product in $M$, and
$[\widetilde{V}] \in H^2(\widetilde{X}, \ZZ)$ is the class of 
$\widetilde{V}$. Let $F \cong \PP^1(\CC)$ be a fiber of $\widetilde{V}$. 
Then we have
\begin{align*}
 p^{*} \alpha \cdot p^{*} \beta = (\alpha \cdot \beta)_{C_1 \times C_2} \cdot [F]
\end{align*}
and hence
\begin{align*}
 \phi(\alpha) \cdot \phi(\beta) 
= (\alpha \cdot \beta) \cdot ([F] \cdot \iota^*[\widetilde{V}]).  
\end{align*}
Lastly, let $[H] \in H^2(X, \ZZ)$ be the hyperplane class and $E_i$ 
be exceptinal divisors $\pi^{-1}(C_i)$. We have 
\begin{align*}
[\widetilde{V}] = \pi^*[V] - d[E_1] - d[E_2]
= d(\pi^*[H] - [E_1] - [E_2]). 
\end{align*}
By calculation with local coordinates in the previous Lemma, we see that
\begin{align*}
[F] \cdot \pi^*[H] = 1, \qquad [F] \cdot \pi^*[E_i] = 1. 
\end{align*}
Therefore we have $[F] \cdot \iota^*[\widetilde{V}] = -d$, and we conclude the formula. 
Since the intersection form on $H^1(C_1, \ZZ) \otimes H^1(C_2, \ZZ) \cong \ZZ^{4g^2}$ is
\begin{align*}
\begin{bmatrix}0 & 1 \\ 1 & 0\end{bmatrix} \oplus \cdots \oplus
\begin{bmatrix}0 & 1 \\ 1 & 0\end{bmatrix} \ (2g^2 \ \text{\rm{times}}),
\end{align*}
we complete the proof of (1).
\\
(2) \ We may assume that a topological 1-cycle $\gamma_i$ on $C_i$ does not pass through 
any $d$-flex points on $C_i$, and that a plane $S \subset X$ of rank 2 is disjoint from 
$\mathrm{Im} \, \phi$. 
\end{proof}
\section{cubic 4-folds}
\subsection{cubic 4-folds}
Let $X \subset \PP^5(\CC)$ be a smooth cubic hypersurface. We have
\begin{align*}
  h^{4,0}(X) = 0, \quad h^{3,1}(X) = 1, \quad h^{2,2}(X) = 21
\end{align*}
and the integral middle cohomology $H^4(X, \ZZ)$ is the unimodular lattice 
$(+1)^{\oplus 21} \oplus (-1)^{\oplus 2}$.  
The integral Hodge conjecture is known to be valid, that is, 
$H^{2,2}(X) \cap H^4(X, \ZZ)$ is spanned by algebraic cycles (\cite{V07}). 
Let $T_X \subset H^4(X, \ZZ)$ be the orthogonal complement of algebraic cycles. 
We call it the transcendental lattice of $X$. In the following, we show that 
$T_X$ for a general cubic fourfold $X$ given by $F(X) = F(y)$ is 
\begin{align*}
\mathrm{Im} \, \phi = \begin{bmatrix}0 & 3 \\ 3 & 0\end{bmatrix} \oplus 
\begin{bmatrix}0 & 3 \\ 3 & 0\end{bmatrix}. 
\end{align*}
\subsection{Fermat cubic 4-fold}
In \cite{DS16}, Degtyarev and Shimada studied the sublattice of the middle homology 
group of Fermat varieties generated by the classes of linear subspaces. 
They gave an algebraic (or rather combinatorial) criterion 
for the primitivity of this submodule. To apply this result, we take
\begin{align*}
F_1 = z_0^3 + z_1^3 + z_2^3, \quad 
F_2 = -(z_3^3 + z_4^3 + z_5^3).
\end{align*}
Namely, we consider the Fermat cubic 4-fold
\begin{align*}
 X_0 : F_1 - F_2 = (z_0^3 + z_1^3 + z_2^3) + (z_3^3 + z_4^3 + z_5^3) = 0.  
\end{align*}
According to Degtyarev and Shimada, let $\mathcal{K}$ be the set of indices
\begin{align*}
J_1 = [01|23|45], \quad J_2 = [01|24|35], \quad J_3 = [02|13|45], 
\quad J_4 = [02|14|35]. 
\end{align*}
For $J = [j_0,k_0 | j_1,k_1 | j_2,k_2] \in \mathcal{K}$ and 
$\beta = (\beta_0, \beta_1, \beta_2) \in \mathcal{B} = (\mu_3)^3$, we define a
plane
\begin{align*}
 L_{J, \beta} \ : \ z_{k_0} = -\beta_0 z_{j_0}, \quad z_{k_1} = -\beta_1 z_{j_1}, \quad 
z_{k_2} = -\beta_0 z_{j_2}  
\end{align*}
in $X$. Note that these planes are of rank $2$, that is, induced by flex tangents of $C_i$. 
Let $\mathcal{L}_{\mathcal{K}}$ be the submodule of $H^4(X_0, \ZZ)$ generated by 
$[L_{J, \beta}]$. By Theorem 1.1 in \cite{DS16}, the torsions of the 
quotient module 
$H^4(X_0, \ZZ)/\mathcal{L}_{\mathcal{K}}$ is isomorphic to the torsions of
\begin{align} \label{DS-module}
 \ZZ[t_1, t_2, t_3, t_4, t_5]/
(t_i^2 + t_i + 1,\ \rho_J \ |\  i=1,\dots,5, \ J \in \mathcal{K}),
\end{align}
where
\begin{align*}
\rho_J = (1 + t_{j_1} + t_{j_1} t_{k_1}) (1 + t_{j_2} + t_{j_2} t_{k_2}). 
\end{align*}
\begin{lem}
The module {\rm (\ref{DS-module})} is torsion-free. Therefore 
the sublattice $\mathcal{L}_{\mathcal{K}}$ is primitive in $H^4(X_0, \ZZ)$.  
\end{lem}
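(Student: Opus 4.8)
The plan is to show that the ring $R = \ZZ[t_1,\dots,t_5]/(t_i^2+t_i+1,\ \rho_J)$ is torsion-free as an abelian group, and for this I would work explicitly with a monomial basis before imposing the relations $\rho_J$. First I would note that the quotient $A = \ZZ[t_1,\dots,t_5]/(t_i^2+t_i+1 \mid i=1,\dots,5)$ is a free $\ZZ$-module: since each $t_i$ satisfies a monic quadratic with integer coefficients, $A \cong \ZZ[\w]^{\otimes 5}$ is the tensor product (over $\ZZ$) of five copies of the ring of Eisenstein integers $\ZZ[\w]=\ZZ[t]/(t^2+t+1)$, hence free of rank $2^5 = 32$ with $\ZZ$-basis the squarefree monomials $\prod_{i \in S} t_i$ for $S \subseteq \{1,\dots,5\}$. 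So $A$ is already torsion-free; the issue is only whether quotienting by the ideal generated by the four elements $\rho_J$ introduces torsion.

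Next I would reduce the problem to a concrete linear-algebra computation over $\ZZ$. Writing $R = A/(\rho_{J_1},\dots,\rho_{J_4})$, torsion-freeness of $R$ is equivalent to the statement that the submodule $N = \sum_J A\cdot\rho_J \subset A$ is \emph{saturated} (i.e.\ primitive) in $A$, meaning $A/N$ has no torsion. Concretely, $N$ is the image of the $\ZZ$-linear map $A^{\oplus 4} \to A$ sending $(a_J)_J \mapsto \sum_J a_J \rho_J$; after fixing the rank-$32$ monomial basis of $A$ and expanding each product $a_J\rho_J$ (using $t_i^2 = -t_i-1$ to reduce back to squarefree monomials), this is a completely explicit integer matrix, and I would check that the nonzero elementary divisors of its Smith normal form are all equal to $1$. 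Equivalently, and more cleanly, I would verify that $\mathrm{rank}_{\QQ}(N\otimes\QQ) = \mathrm{rank}_{\FF_p}(N\otimes\FF_p)$ for every prime $p$, which forces $A/N$ to be torsion-free; since the only primes that can possibly matter are those dividing some maximal minor, this is a finite check, and in fact the structure of the $\rho_J$ (products of the idempotent-like factors $1+t_{j}+t_j t_k$) suggests the only relevant prime to watch is $3$, because $(1-\w)^2 = -3\w$ in $\ZZ[\w]$ and the factors $1+t_j+t_jt_k$ are closely tied to the prime $(1-\w)$ above $3$.

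The main obstacle I anticipate is precisely controlling the behavior at the prime $3$. Over $\ZZ[\w]$ the element $1+\w$ is a unit times $(1-\w)$, so the generators $\rho_J$ become divisible by a power of $(1-\w)$, and torsion in $A/N$ would appear exactly if the reduction $\rho_J \bmod 3$ satisfied extra $\FF_3$-linear dependencies not visible rationally. So after the general setup I would focus the computation on comparing $\dim_{\QQ}(N\otimes\QQ)$ with $\dim_{\FF_3}(N\otimes\FF_3)$, expanding the four products $\rho_J$ in the squarefree basis and checking that reduction mod $3$ does not drop the rank. Finally, once $R$ is shown torsion-free, the second assertion follows immediately from Theorem 1.1 of \cite{DS16}: that theorem identifies the torsion of $H^4(X_0,\ZZ)/\mathcal{L}_{\mathcal{K}}$ with the torsion of $R$, so torsion-freeness of $R$ means $H^4(X_0,\ZZ)/\mathcal{L}_{\mathcal{K}}$ is torsion-free, which is exactly the statement that $\mathcal{L}_{\mathcal{K}}$ is a primitive (saturated) sublattice of $H^4(X_0,\ZZ)$.
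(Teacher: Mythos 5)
Your proposal is correct in substance but takes a genuinely heavier route than the paper, and the comparison is instructive. Both arguments begin identically: $A=\ZZ[t_1,\dots,t_5]/(t_i^2+t_i+1)$ is free of rank $2^5$ on the squarefree monomials $t^q$. From there the paper does not run a Smith normal form on the $32\times 128$ matrix of $(a_J)\mapsto\sum_J a_J\rho_J$; it simply expands the four generators $\rho_{J_1},\dots,\rho_{J_4}$ in the monomial basis and records their coefficients on the four monomials $1$, $t_3$, $t_1t_3$, $t_1t_3t_5$. The resulting $4\times 4$ matrix is lower unitriangular, hence unimodular, so $\ZZ\rho_{J_1}\oplus\cdots\oplus\ZZ\rho_{J_4}$ is a direct summand of $A$ --- a short hand computation with no computer and no prime-by-prime analysis. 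What your version buys is precision about what actually has to be saturated: the module (\ref{DS-module}) is the quotient of $A$ by the \emph{ideal} $N=\sum_J A\rho_J$, and torsion-freeness of $A/N$ requires $N$ itself (the image of $A^{\oplus 4}\to A$, of rank well above $4$) to be saturated, not merely the $\ZZ$-span of the four generators; primitivity of a submodule does not in general pass to a larger submodule containing it (already $\ZZ(1-\w)$ is primitive in $\ZZ[\w]$ while $\ZZ[\w]/(1-\w)\cong\ZZ/3\ZZ$ is pure torsion). So your Smith-normal-form check --- equivalently your comparison of ranks over $\QQ$ and over $\mathbb{F}_3$, and $3$ is indeed the only prime that can intervene, for exactly the reason you give --- is the computation that fully justifies the lemma, whereas the paper's printed argument leaves the passage from the span of the generators to the ideal they generate implicit. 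The one thing your proposal still owes is the actual execution of that finite check: as written it is a well-posed plan rather than a completed verification, but the plan is sound, terminates, and targets the right statement.
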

\begin{proof}
Note that the quotient
\[
 \ZZ[t_1, t_2, t_3, t_4, t_5]/(t_i^2 + t_i + 1 \ |\  i=1,\dots,5)
\]
is a free module of rank $2^5$, generated by monomials
$t^q = t_1^{q_1} t_2^{q_2} t_3^{q_3} t_4^{q_4} t_5^{q}$ with $q_i \in \{0,1\}$. 
On the other hand, polynomials $\rho_J$ are sums of these monomials, 
and coefficients of monomials
\[
1, \quad t_3, \quad t_1 t_3, \quad t_1 t_3 t_5
\]
in $\rho_J$ are given in the following table.
\begin{align*}
\begin{array}{c|cccc}
 & 1 & t_3 & t_1 t_3 & t_1 t_3 t_5 \\
\hline
\rho_{01|23|45} = (1 + t_2 + t_2 t_3) (1 + t_4 + t_4 t_5) & 1 & 0 & 0 & 0 \\
\rho_{01|24|35} = (1 + t_2 + t_2 t_4) (1 + t_3 + t_3 t_5) & 1 & 1 & 0 & 0 \\
\rho_{02|13|45} = (1 + t_1 + t_1 t_3) (1 + t_4 + t_4 t_5) & 1 & 0 & 1 & 0 \\
\rho_{02|14|35} = (1 + t_1 + t_1 t_4) (1 + t_3 + t_3 t_5) & 1 & 1 & 1 & 1 \\
\end{array}
\end{align*}
This $4 \times 4$ matrix is unimodular. Hence $\oplus \ZZ \rho_J$ is 
primitive in $\oplus \ZZ t^q$ and (\ref{DS-module}) is free.  
\end{proof}
To compute the primitive sublattice $\mathcal{L}_{\mathcal{K}}$ explicitly, 
we note that
\begin{lem}
Let $X$ be a smooth cubic fourfold, $S$ and $T$ be planes in $X$.  
Then we have 
\begin{align*}
 [S] \cdot [T] 
= \begin{cases} 0 \quad (S \cap T = \phi) \\ 1 \quad (S \cap T = \text{a point}) \\ 
-1 \quad (S \cap T = \PP^1) \\ 3 \quad (S = T) \end{cases}
\end{align*}
\end{lem}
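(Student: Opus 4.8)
The plan is to exploit the fact that $S$ and $T$ are \emph{linear} subspaces of $\PP^5$, so their intersection $Z := S \cap T$ is again a linear subspace — empty, a point, a line $\PP^1$, or all of $S = T$ — and in every case the intersection is \emph{clean}: for each $p \in Z$ one has $T_p S \cap T_p T = T_p Z$ inside $T_p X$, since this already holds in $T_p \PP^5$ and $T_pS, T_pT \subset T_pX$. Thus $S$ and $T$ meet cleanly along the smooth subvariety $Z$, and I can compute $[S]\cdot[T]$ by the clean (excess) intersection formula
\[
 [S]\cdot[T] = \int_Z c_e(E), \qquad E = (N_{S/X}|_Z)/N_{Z/T},
\]
where $e = \mathrm{rank}\, E = \mathrm{codim}_X S - \mathrm{codim}_T Z = \dim Z$ is the excess.

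The one geometric input I need is the normal bundle $N_{S/X}$ of a plane $S \cong \PP^2$. From the normal bundle sequence of $S \subset X \subset \PP^5$,
\[
 0 \longrightarrow N_{S/X} \longrightarrow \mathcal{O}_{\PP^2}(1)^{\oplus 3} \longrightarrow \mathcal{O}_{\PP^2}(3) \longrightarrow 0,
\]
using $N_{S/\PP^5} = \mathcal{O}(1)^{\oplus 3}$ and $N_{X/\PP^5}|_S = \mathcal{O}(3)$ (as $\deg X = 3$), the Chern class computation $c(N_{S/X}) = (1+h)^3(1+3h)^{-1}$ gives $c_1(N_{S/X}) = 0$ and $c_2(N_{S/X}) = 3h^2$, where $h$ is the hyperplane class on $S = \PP^2$.

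With these in hand I would run through the four strata. If $Z = \emptyset$ the product is $0$. If $Z$ is a point then $e = 0$, the intersection is transverse, and $\int_Z c_0(E) = 1$. If $Z = S = T$ then $N_{Z/T} = 0$, so $E = N_{S/X}$ and $[S]\cdot[S] = \int_S c_2(N_{S/X}) = 3$. The remaining, genuinely non-transverse, case is $Z = \ell \cong \PP^1$: here $e = 1$ and $E$ is the line bundle $(N_{S/X}|_\ell)/N_{\ell/T}$, so
\[
 [S]\cdot[T] = \deg E = c_1(N_{S/X})\cdot \ell - \deg \mathcal{O}_{\PP^1}(1) = 0 - 1 = -1,
\]
using $c_1(N_{S/X}) = 0$ and that a line in the plane $T = \PP^2$ has normal bundle $N_{\ell/T} = \mathcal{O}_{\PP^1}(1)$.

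The main thing to get right is the clean intersection formula itself, and in particular the correct identification of the excess bundle $E$ and its rank in each stratum; the Chern class bookkeeping for $N_{S/X}$ and $N_{\ell/T}$ is then routine. I would want to double-check cleanness \emph{inside $X$} (not merely in $\PP^5$) and smoothness of $Z$ in each case, but both are immediate from linearity of $Z$. As sanity checks, the formula is symmetric under swapping $S$ and $T$ (replacing $E$ by $(N_{T/X}|_Z)/N_{Z/S}$ yields the same degrees), and the transverse point case and the self-intersection case agree with a direct computation.
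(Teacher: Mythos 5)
Your proposal is correct and follows essentially the same route as the paper: both compute $c(N_{S/X}) = 1 + 3h^2$ from the normal bundle sequence for $S \subset X \subset \PP^5$ and then apply the excess intersection formula, the paper in the form of the coefficient of $t$ in $c_t(N_{S/X})c_t(N_{T/X})/c_t(N_{\ell/X})$ and you in the equivalent excess-bundle form $\int_Z c_e\bigl((N_{S/X}|_Z)/N_{Z/T}\bigr)$. Your explicit verification of cleanness of the intersection (immediate from linearity) and the symmetry sanity check are details the paper leaves implicit, but the substance is identical.
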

\begin{proof}
The first and the second case are obvious. In the case of $S=T$, the self-intersection 
number $[S] \cdot [S]$ is given by the 2nd Chern class $c_2(N_{S/X})$ of the normal bundle 
$N_{S/X}$.  
If $S \cap T = \PP^1$, the intersection number is given by the coefficient of $t$ 
in the power series
\begin{align*}
 \frac{c_t(N_{S/X}) c_t(N_{T/X})}{c_t(N_{\PP^1 / X})}. 
\end{align*} 
From $c_t(N_{\PP^2 /X}) = 1+ 3t^2$ and $c_t(N_{\PP^1 /X}) = 1+ t$, we obtain intersection 
numbers.
\end{proof}
Therefore compuations of intersection numbers $[L_{I, \alpha}] \cdot [L_{J, \beta}]$ 
are reduced to that of ranks of linear equations for $L_{I, \alpha} \cap L_{J, \beta}$. 
Using a computer, we can show that  
\begin{lem} \label{S19}
The sublattice $\mathcal{L}_{\mathcal{K}} \subset H^4(X_0,\ZZ)$ is of rank $19$, 
and its discriminant is $81$. Moreover we have 
$(\mathcal{L}_{\mathcal{K}})^{\perp} = \mathrm{Im} \, \phi$, and hence 
$\mathrm{Im} \, \phi$ is a primitive sublattice of $H^4(X_0,\ZZ)$.  
\end{lem}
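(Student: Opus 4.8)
The plan is to compute the lattice $\mathcal{L}_{\mathcal{K}}$ explicitly from its $108$ generators $[L_{J,\beta}]$ ($J \in \mathcal{K}$, $\beta \in \mathcal{B} = (\mu_3)^3$) and then identify its orthogonal complement with $\mathrm{Im}\,\phi$ by a rank-and-discriminant count. First I would assemble the Gram matrix of these generators. By the intersection-number lemma, each entry $[L_{I,\alpha}] \cdot [L_{J,\beta}]$ equals $0$, $1$, $-1$ or $3$ according to whether the two planes are disjoint, meet in a point, meet in a $\PP^1$, or coincide; and this intersection type is detected by the rank of the $6 \times 6$ system of linear forms cutting out $L_{I,\alpha} \cap L_{J,\beta}$ in $\PP^5(\CC)$, the ranks $6,5,4,3$ giving respectively the empty set, a point, a line, and an equality of planes. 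Carrying out these $\binom{108}{2}$ rank computations over a field containing $\mu_3$ is the step I would delegate to a computer, and it produces the full Gram matrix, from which one reads off that $\mathcal{L}_{\mathcal{K}}$ has rank $19$ and discriminant $81$.

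Next I would record the two facts about $\mathrm{Im}\,\phi$ that drive the comparison. By Lemma \ref{trans}(1), with $d = 3$ and $g = (d-1)(d-2)/2 = 1$, the lattice $\mathrm{Im}\,\phi$ has rank $4g^2 = 4$ and intersection form $\begin{bmatrix}0 & 3\\ 3 & 0\end{bmatrix} \oplus \begin{bmatrix}0 & 3\\ 3 & 0\end{bmatrix}$, whose determinant has absolute value $81$; in particular $\mathrm{Im}\,\phi$ is nondegenerate of discriminant $81$. By Lemma \ref{trans}(2), every plane of rank $2$ is orthogonal to $\mathrm{Im}\,\phi$, and since all the $L_{J,\beta}$ are of rank $2$ this yields the inclusion $\mathrm{Im}\,\phi \subseteq (\mathcal{L}_{\mathcal{K}})^{\perp}$.

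Finally I would close the argument with standard lattice theory inside the unimodular lattice $H^4(X_0,\ZZ)$. Since $\mathcal{L}_{\mathcal{K}}$ has nonzero discriminant it is nondegenerate, so $(\mathcal{L}_{\mathcal{K}})^{\perp}$ has rank $23 - 19 = 4$. The preceding lemma shows $\mathcal{L}_{\mathcal{K}}$ is primitive; hence, by the fact that the discriminant of a primitive nondegenerate sublattice of a unimodular lattice equals in absolute value that of its orthogonal complement, $(\mathcal{L}_{\mathcal{K}})^{\perp}$ again has discriminant $81$. Now $\mathrm{Im}\,\phi \subseteq (\mathcal{L}_{\mathcal{K}})^{\perp}$ is a finite-index inclusion of rank-$4$ lattices, and the relation $\mathrm{disc}(\mathrm{Im}\,\phi) = [(\mathcal{L}_{\mathcal{K}})^{\perp} : \mathrm{Im}\,\phi]^2 \,\mathrm{disc}((\mathcal{L}_{\mathcal{K}})^{\perp})$ forces the index to be $1$. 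Therefore $\mathrm{Im}\,\phi = (\mathcal{L}_{\mathcal{K}})^{\perp}$, and being an orthogonal complement it is automatically primitive in $H^4(X_0,\ZZ)$. \emph{The main obstacle} is the bookkeeping in the computer-assisted Gram-matrix step — correctly enumerating the $108$ planes and their pairwise intersection types — whereas the lattice-theoretic conclusion is routine once the pair $(19,81)$ is in hand; one should only take care that the discriminant comparison genuinely uses the primitivity supplied by the previous lemma.
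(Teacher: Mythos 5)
Your proposal is correct and follows essentially the same route as the paper: compute the Gram matrix of the plane classes via the intersection-number lemma, obtain rank $19$ and discriminant $81$ for $\mathcal{L}_{\mathcal{K}}$, and combine the primitivity supplied by the preceding lemma with the orthogonality and discriminant of $\mathrm{Im}\,\phi$ from Lemma \ref{trans} to force $(\mathcal{L}_{\mathcal{K}})^{\perp} = \mathrm{Im}\,\phi$. The only organizational differences are that the paper works with $19$ selected planes and separately verifies (via an eigenvector computation) that all $108$ classes lie in their $\ZZ$-span instead of reducing the full $108\times 108$ Gram matrix, and that it leaves the final discriminant comparison implicit, which you spell out correctly.
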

\begin{proof}
Let us consider 19 planes $S_1, \dots, S_{19}$ in the following table. 
\begin{align*}
\begin{array}{|c|c|c||c|c|c||c|c|c||c|c|c|}
\hline
L_{J, \beta} & J & \beta & L_{J, \beta} & J & \beta & L_{J, \beta} & J & \beta &
L_{J, \beta} & J & \beta \\
\hline
S_1 & J_1 & (1, 1, 1) & S_6 & J_1 & (\w, 1, 1) &
S_{11} & J_2 & (1, 1, \w) & S_{16} & J_3 & (1, \w, 1) \\
S_2 & J_1 & (1, 1, \w) & S_7 & J_1 & (\w, 1, \w) &
S_{12} & J_2 & (\w, 1, 1) & S_{17} & J_3 & (1, \w, \w) \\
S_3 & J_1 & (1, 1, \w^2) & S_8 & J_1 & (\w, \w, 1) &
S_{13} & J_2 & (\w, 1, \w) & S_{18} & J_4 & (1, 1, 1) \\
S_4 & J_1 & (1, \w, 1) & S_9 & J_1 & (\w, \w, \w) &
S_{14} & J_3 & (1, 1, 1) & S_{19} & J_4 & (1, 1, \w) \\
S_5 & J_1 & (1, \w, \w) & S_{10} & J_2 & (1, 1, 1) &
S_{15} & J_3 & (1, 1, \w) & & & \\
\hline
\end{array}
\end{align*}
Using the previous Lemma, we can compute the intersection matrix $M$ of 
these 19 planes (see Appendix) and we have $\det M = 81$. 
Since $\mathcal{L}_{\mathcal{K}}$ is orthogonal to $\rm{Im} \, \phi$ 
(which is of rank $4$) in Lemma \ref{trans}, we have 
\begin{align*}
  (\left<[S_1], \dots, [S_{19}] \right>_{\ZZ} \oplus \mathrm{Im} \, \phi) 
\otimes \QQ = H^4(X_0, \QQ).
\end{align*}
Therefore these 19 planes form  a basis of $\mathcal{L}_{\mathcal{K}} \otimes \QQ$. 
To complete the proof, we need to show that 108 planes $L_{J, \beta}$ are represented by 
$\ZZ$-linear combinations of $S_1, \dots, S_{19}$ in $H^4(X_0, \ZZ)$. 
Using a computer, we can check this by the following way. For $J \in \mathcal{K}$ 
and $\beta \in \mathcal{B}$, let $M_{J, \beta}$ be the intersection matrix of 
20 planes $S_1, \dots, S_{19}, L_{J, \beta}$. We can check that $\det M_{J, \beta} = 0$ and 
the eigenspace of $0$ is 1-dimensional. Moreover we have an integral eigenvector of 
the form
\begin{align*}
  (m_1, m_2, \dots, m_{19},1) \in \ZZ^{20},
\end{align*}
and then
\begin{align*}
  m_1[S_1] + m_2[S_2] + \dots + m_{19} [S_{19}] + [L_{J, \beta}] = 0
\end{align*}
in $H^4(X_0,\ZZ)$, since it is orthogonal to $[S_i]$ and $\mathrm{Im} \, \phi$. 
\end{proof}
Note that our cubic fourfolds $X$ are obtained as deformations of 
the Fermat cubic $X_0$, with diagram \ref{diagram}. Therefore 
$\mathrm{Im} \, \phi$ is primitive in $H^4(X, \ZZ)$ for other $X$. 
Now Theorem \ref{main-th2} follows from facts that $\phi$ is a morphism 
of Hodge structures and $\phi_{\CC}$ maps
\begin{align*}
[H^{1,0}(C_1) \otimes H^{1,0}(C_2)] \oplus [H^{0,1}(C_1) \otimes H^{0,1}(C_2)]
\end{align*}
to $H^{3,1}(X) \oplus H^{1,3}(X)$, and $H^1(C_1, \QQ) \otimes H^1(C_2, \QQ)$ is 
indecomposable as a Hodge structure for general $C_1$ and $C_2$. 
\appendix
\section{Intersection matrix}
Let $\mathbb{I}$ be a $19 \times 19$ matrix all of whose entries are $1$. 
Let $M$ be the intersection matrix of $S_1, \dots, S_{19}$ in the proof of 
Lemma \ref{S19}. Then the matrix $M + \mathbb{I}$
(to avoid using the minus sign, we added $\mathbb{I}$) is given by 
\small
\[
M + \mathbb{I} =
\left(
\begin{array}{ccccc|ccccc|ccccc|cccc}
 4 & 0 & 0 & 0 & 2 & 0 & 2 & 2 & 1 & 0 & 2 & 2 & 1 & 0 & 2 & 2 & 1 & 2 & 1 \\
 0 & 4 & 0 & 2 & 0 & 2 & 0 & 1 & 2 & 2 & 0 & 1 & 2 & 2 & 0 & 1 & 2 & 1 & 2 \\
 0 & 0 & 4 & 2 & 2 & 2 & 2 & 1 & 1 & 2 & 2 & 1 & 1 & 2 & 2 & 1 & 1 & 1 & 1 \\
 0 & 2 & 2 & 4 & 0 & 2 & 1 & 0 & 2 & 2 & 2 & 1 & 1 & 2 & 1 & 0 & 2 & 1 & 1 \\
 2 & 0 & 2 & 0 & 4 & 1 & 2 & 2 & 0 & 0 & 2 & 2 & 1 & 1 & 2 & 2 & 0 & 2 & 1 \\
\hline
 0 & 2 & 2 & 2 & 1 & 4 & 0 & 0 & 2 & 2 & 1 & 0 & 2 & 2 & 1 & 2 & 1 & 1 & 2 \\
 2 & 0 & 2 & 1 & 2 & 0 & 4 & 2 & 0 & 1 & 2 & 2 & 0 & 1 & 2 & 1 & 2 & 1 & 1 \\
 2 & 1 & 1 & 0 & 2 & 0 & 2 & 4 & 0 & 1 & 1 & 2 & 2 & 0 & 2 & 2 & 1 & 2 & 1 \\
 1 & 2 & 1 & 2 & 0 & 2 & 0 & 0 & 4 & 2 & 1 & 0 & 2 & 2 & 0 & 1 & 2 & 1 & 2 \\
 0 & 2 & 2 & 2 & 0 & 2 & 1 & 1 & 2 & 4 & 0 & 0 & 2 & 2 & 1 & 1 & 2 & 0 & 2 \\
\hline
 2 & 0 & 2 & 2 & 2 & 1 & 2 & 1 & 1 & 0 & 4 & 2 & 0 & 1 & 2 & 1 & 1 & 2 & 0 \\
 2 & 1 & 1 & 1 & 2 & 0 & 2 & 2 & 0 & 0 & 2 & 4 & 0 & 1 & 2 & 1 & 1 & 2 & 1 \\
 1 & 2 & 1 & 1 & 1 & 2 & 0 & 2 & 2 & 2 & 0 & 0 & 4 & 1 & 1 & 2 & 1 & 1 & 2 \\
 0 & 2 & 2 & 2 & 1 & 2 & 1 & 0 & 2 & 2 & 1 & 1 & 1 & 4 & 0 & 0 & 2 & 0 & 2 \\
 2 & 0 & 2 & 1 & 2 & 1 & 2 & 2 & 0 & 1 & 2 & 2 & 1 & 0 & 4 & 2 & 0 & 2 & 0 \\
\hline
 2 & 1 & 1 & 0 & 2 & 2 & 1 & 2 & 1 & 1 & 1 & 1 & 2 & 0 & 2 & 4 & 0 & 2 & 2 \\
 1 & 2 & 1 & 2 & 0 & 1 & 2 & 1 & 2 & 2 & 1 & 1 & 1 & 2 & 0 & 0 & 4 & 0 & 2 \\
 2 & 1 & 1 & 1 & 2 & 1 & 1 & 2 & 1 & 0 & 2 & 2 & 1 & 0 & 2 & 2 & 0 & 4 & 0 \\
 1 & 2 & 1 & 1 & 1 & 2 & 1 & 1 & 2 & 2 & 0 & 1 & 2 & 2 & 0 & 2 & 2 & 0 & 4 \\
\end{array}
\right).
\]
\normalsize

\end{document}